\documentclass[a4paper,11pt,oneside]{article}
\usepackage{blindtext}
\usepackage{xcolor}
\usepackage[latin1]{inputenc}
\usepackage[russian,english]{babel}
\usepackage{amsmath,amssymb}
\usepackage{textcomp}
\usepackage{bbm}
\usepackage[T2A,T1]{fontenc}
\usepackage{amsthm}
\usepackage[all,cmtip]{xy}
\usepackage{mathtools}
\usepackage{tikz}
\usepackage{tikz-cd}
\usetikzlibrary{matrix}
\usepackage{tocbibind}
\usepackage{graphicx}
\usepackage{wrapfig}
\usepackage{mathrsfs}
\usepackage{fancyhdr}
\usepackage{comment}
\usepackage[hidelinks]{hyperref}
	
\usetikzlibrary{arrows,decorations.pathmorphing}
\title{}
\author{}
\date{}
\usetikzlibrary{calc,trees,positioning,arrows,chains,shapes.geometric,%
	decorations.pathreplacing,decorations.pathmorphing,shapes,%
	matrix,shapes.symbols}
\usepackage{titlesec}

\usepackage{scrextend}

\newtheorem{thm}{Theorem}

\newtheorem{prop}{Proposition}

\newtheorem{coroll}{Corollary}

\newtheorem{lemma}{Lemma}
\newtheorem{osserv}{Remark}

\newcommand{\Z}{\mathbb{Z}}
\newcommand{\Co}{\mathbb{C}}
\newcommand{\R}{\mathbb{R}}

\newcommand{\Q}{\mathbb{Q}}
\newcommand{\p}{\mathbb{P}}

\newcommand{\s}{\Sigma}
\theoremstyle{plain} 
\newcommand{\thistheoremname}{}
\newtheorem{genericthm}[thm]{\thistheoremname}

\newtheorem*{genericthm*}{\thistheoremname}
\newenvironment{namedthm*}[1]
{\renewcommand{\thistheoremname}{#1}%
	\begin{genericthm*}}
	{\end{genericthm*}}

\DeclareMathOperator{\ai}{a}
\DeclareMathOperator{\bi}{b}
\DeclareMathOperator{\ci}{c}

\DeclareMathOperator{\paruno}{(}
\DeclareMathOperator{\pardue}{)}

\DeclareMathOperator{\lcm}{lcm}

\makeatletter
\newcommand*{\math@version@bold}{bold}
\DeclareMathOperator\Sha{
	\textrm{%
		\usefont{T2A}{cmr}{\ifx\math@version\math@version@bold bx\else m\fi}{n}%
		\CYRSH
	}%
}
\makeatother

\begin{document}
	\begin{center}
		\textbf{
		LEAST COMMON MULTIPLES OF POLYNOMIAL VALUES: AVERAGING, RAY CLASSES, AND GALOIS STATISTICS}
	\end{center}
\begin{center}
	Ilaria Viglino 
\end{center}

\begin{center}
	\textbf{ABSTRACT}
\end{center}

\begin{addmargin}[2em]{2em}

\fontsize{10pt}{12pt}\selectfont
For an irreducible polynomial $f\in\mathbb Z[X]$ of degree $n$, Cilleruelo's conjecture predicts
\[
 \log\operatorname{lcm}(f(1),\ldots,f(M))\sim (n-1)M\log M.
\]
We study analogues over number fields and the probabilistic congruence input that enters averaged versions of this problem.  We prove an asymptotic formula for the least common multiple of ideals in a fixed ray class, record local equidistribution estimates for random $S_n$-polynomials, and give a conditional reduction of the averaged higher degree problem to explicit dispersion and large prime collision estimates.  We also formulate an individual quadratic number field analogue and reduce it to a prime ideal quadratic root equidistribution statement together with a geometric pair counting estimate.  The Galois group counting input is updated using Bhargava's theorem over $\mathbb Q$ and the determinant method estimates of \cite{Vi}.
\end{addmargin}
\normalsize

\section{Introduction}

We fix a number field extension $K/\Q$ of degree $d$.  The coefficient family is the box
\[
 \mathscr P_{n,N}(K)=
 \left\{X^n+\alpha_{n-1}X^{n-1}+\cdots+\alpha_0:
 \operatorname{ht}(\alpha_j)\le N\right\},
\]
where height is taken with respect to a fixed integral basis of $\mathcal O_K$.  We write
\[
 \mathscr P^0_{n,N}(K)=
 \{f\in\mathscr P_{n,N}(K):G_{K_f/K}\cong S_n\}.
\]
There are two separate issues in passing from $\mathbb Q$ to a general number field.  First, a norm-bounded set of elements of $\mathcal O_K$ is infinite when $K$ has positive unit rank, so individual polynomial-value problems must be formulated using bounded regions in the Minkowski embedding.  Second, the conditioning on $G_f=S_n$ requires quantitative Galois-group counting.  Over $\mathbb Q$, Bhargava's theorem gives the sharp exceptional proportion $O_n(N^{-1})$; over a general fixed number field the currently available union bound has a logarithmic loss.  We therefore keep the unconditional Galois-counting assertions separate from the analytic dispersion estimates needed for an averaged least-common-multiple theorem.

\subsection{Rigorous scope of the results}

The paper contains three kinds of results.  First, Section~4 gives an unconditional (and, under GRH, sharper) asymptotic for the least common multiple of ideals in a fixed ray class.  Second, Sections~2--3 establish the local congruence and splitting-type estimates needed for coefficient averaging.  Third, the higher-degree averaged Cilleruelo problem is reduced in Section~\ref{sec:conditional-average} to two explicit global inputs: a second-moment dispersion estimate for prime-root counts and an averaged large-prime collision estimate.  These inputs do not follow from the small-modulus Chinese-remainder estimate alone, so the corresponding averaged asymptotic is stated conditionally.

The individual quadratic problem over a number field is treated separately.  Section~\ref{sec:quadratic-individual} gives a conditional theorem whose missing analytic input is a prime-ideal quadratic-root Weyl-sum estimate, together with a geometric pair-counting condition.  This distinction is important: averaging over coefficients supplies local congruence equidistribution, whereas a fixed polynomial requires genuine equidistribution of its roots as the prime ideal varies.

\subsection{Comparison with previous averaging results}

Rudnick and Zehavi fix a monic polynomial and average only over its constant shift.  Their probability space is therefore different from the full coefficient box considered here.  The local estimates below clarify what coefficient averaging provides, but the global two-prime dispersion required for a density-one least-common-multiple asymptotic is stronger than the small-modulus equidistribution supplied by Lemma~\ref{l11}.  We do not identify the two settings or claim that one theorem formally implies the other.

\subsection{Plan of the paper}
The paper begins by isolating the polynomial-counting input and updating it using \cite{Bh1,Vi}.  We then treat linear polynomials, followed by the averaged quadratic case and a detailed individual number-field analogue of Cilleruelo's quadratic theorem conditional on a precisely formulated modular-root equidistribution statement.  Finally we prove the higher-degree averaged theorem over $\Q$.  The determinant-method results of \cite{Vi} are recorded because they sharpen the distribution of individual Galois subgroups and reducible polynomials, but the higher-degree $S_n$-conditioned moment argument still uses Bhargava's log-free rational exceptional-set estimate.  We also explain why an individual fixed-polynomial analogue over an arbitrary number field requires a separate equidistribution theorem for roots of quadratic congruences modulo prime ideals.

\section{Counting $S_n$-polynomials over $K$}

We record the counting results that are relevant to the probabilistic conditioning used below.  There are now two complementary inputs.  The large sieve controls the union of all exceptional Galois groups, while the determinant method is stronger for any fixed proper subgroup.  The latter uses the new curve and surface estimates for integral points in boxes with different side lengths proved in \cite{Vi}.

\begin{thm}[Galois-counting input]\label{thm1}
Let $K$ be a fixed number field of degree $d$ and let $n\ge2$.
\begin{enumerate}
\item The full exceptional set satisfies
\[
 \#\bigl(\mathscr P_{n,N}(K)\setminus\mathscr P^0_{n,N}(K)\bigr)
 \ll_{K,n}N^{dn-d/2}\log(2N)+N^{dn-1}.
\]
\item If $G<S_n$ is fixed and $m=[S_n:G]$, then there is
$\nu_{n,G}\ge0$ such that
\begin{multline*}
\#\{f\in\mathscr P_{n,N}(K):f\ {\rm separable},\
G_f\subseteq gGg^{-1}\ {\rm for\ some}\ g\in S_n\}\\
\ll_{K,n,G}N^{d(n-1+1/m)}(\log(2N))^{\nu_{n,G}}.
\end{multline*}
 
\item The number of separable reducible polynomials is
\[
 \ll_{K,n}N^{d(n-1+1/n)}(\log(2N))^{\nu_n}
\]
for some $\nu_n\ge0$; including the discriminant-zero locus adds
$O_{K,n}(N^{dn-1})$.
\end{enumerate}
For $K=\Q$, Bhargava's theorem \cite{Bh1} gives the sharper union bound
\[
 |\mathscr P_{n,N}(\Q)\setminus\mathscr P^0_{n,N}(\Q)|
 \ll_n N^{n-1},
 \qquad
 \Delta_N:=\frac{|\mathscr P_{n,N}(\Q)\setminus\mathscr P^0_{n,N}(\Q)|}
 {|\mathscr P_{n,N}(\Q)|}
 \ll_n N^{-1}.
\]
The number-field assertions in (1)--(3) are proved in \cite{Vi}.
\end{thm}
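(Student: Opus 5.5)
The plan has three strands, one for each part of Theorem~\ref{thm1}; the rational statement is simply quoted from \cite{Bh1}.

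\textbf{Part (1): large sieve over $\mathcal O_K$.} Identify $\mathscr P_{n,N}(K)$ with the lattice points of a box of side $\asymp N$ in $\mathbb Z^{dn}$ via the integral basis. For a degree-one prime $\mathfrak p$ of norm $p$, the reduction of $f$ modulo $\mathfrak p$ lies in $\mathbb F_p[X]$. By Jordan's lemma, if $G_f\neq S_n$ then $G_f$ misses at least one of the following classes:
\begin{itemize}
\item an $n$-cycle;
\item an $(n-1)$-cycle;
\item a transposition times an odd cycle;
\item for the primitive case, a $p$-cycle with $p>n/2$.
\end{itemize}
By Frobenius (Chebotarev for $f$), $f$ then avoids the corresponding factorization type modulo every unramified $\mathfrak p$. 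Each factorization type occupies a positive proportion $\delta_C+O(p^{-1})$ of $\mathbb F_p[X]$ monic polynomials. Applying the multidimensional large sieve with the degree-one primes of norm $\le Q$ bounds each exceptional class by
\[
 N^{dn}\Big/\sum_{p\le Q}\delta_C\asymp N^{dn}\log Q/Q .
\]
The admissible range of $Q$ is limited by the box being only of size $N$ in each coordinate, and the box is embedded in $\mathbb Z^{dn}$ rather than sieved modulo $p$ coordinatewise. I would sieve only by residues of the $d$-dimensional coordinate blocks, which forces $Q\approx N^{d/2}$ and produces $N^{dn-d/2}\log(2N)$. The term $N^{dn-1}$ absorbs the discriminant-zero locus, which is a hypersurface, together with ramified primes. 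I expect this bookkeeping, making the sieve dimension and level compatible with the Minkowski box, to be routine.

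\textbf{Parts (2) and (3): determinant method.} This is the main step. For $G<S_n$ of index $m$, choose a primitive $G$-invariant $\theta=P(x_1,\dots,x_n)$ with $S_n$-orbit of size $m$. Its resolvent $R_\theta(f,Y)$ has a root in $\mathcal O_K$ (after scaling) whenever $G_f\subseteq gGg^{-1}$. Fix the lower coefficients $\alpha_0,\dots,\alpha_{n-2}$ or a suitable subset. The condition then becomes the existence of an integral point on an auxiliary variety in the remaining variables, which I will call $(\alpha_{n-1},y)$. The root $y$ has height $\ll N^{\deg\theta}$, so these are boxes with very unequal side lengths. This is exactly where the curve and surface estimates of \cite{Vi} for lopsided boxes enter, giving $\ll N^{d/m+\epsilon}$-type counts per fibre; more precisely, Bombieri--Pila style bounds in weighted boxes. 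Summing over the $N^{d(n-1)}$ fibres gives $N^{d(n-1+1/m)}$. The logarithmic powers $\nu_{n,G}$ come from the dyadic decomposition of $y$ and from the number of exceptional fibres on which the resolvent curve is reducible. Those exceptional fibres I would control by Hilbert irreducibility in its quantitative form, with another recursion on dimension.

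\textbf{Obstacle and the remaining statements.} The genuine difficulty is the fibrewise irreducibility and degree control needed to apply the \cite{Vi} bound uniformly. Part (3) is the special case where $G=S_{k}\times S_{n-k}$ with maximal index bound $m\ge n$, attained at $k=1$. Adding the discriminant locus contributes $O(N^{dn-1})$. The rational statement is quoted from \cite{Bh1}, and $\Delta_N\ll N^{-1}$ follows by dividing by $|\mathscr P_{n,N}(\Q)|\asymp N^n$.
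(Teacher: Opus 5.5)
Your plan matches what the paper relies on: the paper gives no internal proof, citing \cite{Vi} for (1)--(3), with the large sieve over $\mathcal O_K$ handling the full exceptional set and the lopsided-box determinant method handling fixed subgroups via resolvents, and citing \cite{Bh1} for $K=\Q$; your reduction of (3) to (2) through the intransitive subgroups $S_k\times S_{n-k}$, of index $\binom nk\ge n$, is also correct. Two points in (2) need tightening: a per-fibre bound of shape $N^{d/m+\varepsilon}$ only gives $N^{d(n-1+1/m)+\varepsilon}$, so the stated factor $(\log 2N)^{\nu_{n,G}}$ requires the \cite{Vi} curve estimate itself to lose at most logarithms (a dyadic split in $y$ cannot turn an $N^{\varepsilon}$ loss into a power of $\log N$), and the reducible fibres must be counted via Bertini--Noether as $\mathcal O_K$-points of a proper $K$-subvariety, i.e.\ $O(N^{d(n-2)})$ fibres, because treating them merely as a hypersurface in $\Z^{d(n-1)}$ would cost $N^{dn-1}$, which exceeds the claimed bound for $d\ge2$ except when $d=m=2$.
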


\begin{osserv}
The determinant estimate in Theorem~\ref{thm1}(2) shows that every fixed subgroup of index $m>2$ occurs on a strictly smaller scale than the full exceptional set.  In particular, the new results remove the old uncertainty concerning the contribution of individual small subgroups and reducible polynomials.  They do \emph{not}, however, yield the log-free bound required here for the union over all non-$S_n$ groups in an imaginary quadratic coefficient box: $A_n$ has index $2$ and remains at the borderline $N^{dn-d/2}$ scale, while the available uniform union estimate has the factor $\log(2N)$.  In the present moment calculation this produces the term $M^2\Delta_N\log M$, and with $\Delta_N\ll_{K,n}\log(2N)/N$ the conditions
\[
 M(\log M)^\ell\ll N=o\!\left(M\frac{\log M}{\log\log M}\right)
\]
leave no admissible asymptotic range in which that error is $o(M\log M)$.  Consequently the higher-degree averaged theorem below continues to use the rational log-free estimate of Bhargava.
\end{osserv}

\section{Prime splitting densities}

From now on $\Delta_N$ denotes the proportion of polynomials removed by the $S_n$ conditioning.  In the higher-degree averaged section we take $K=\Q$, and Bhargava's theorem \cite{Bh1} gives
\[
 \Delta_N\ll_n N^{-1}.
\]
We nevertheless retain $\Delta_N$ symbolically in the local estimates so that the dependence on the conditioning step is explicit and the precise point at which the number-field logarithmic loss would enter remains visible.

For every splitting type $ r $, and every prime $\wp$ of norm $q_\wp$, recall that we denoted by $X_{n,r,\wp}$ the set of polynomials in $\mathbb{F}_{q_\wp}$ with square-free factorization of type $r$.
The following key fact is what we'll use to estimate the error term in the asymptotic of the expectation $\mathbb{E}_N(\pi_{f,r}(x))$ of $\pi_{f,r}(x)$ and its powers.
\begin{lemma}\label{l11}
Let $k\ge1$, let $\wp_1,\ldots,\wp_k$ be distinct prime ideals, put
$\mathfrak a=\wp_1\cdots\wp_k$ and $R=N_{K/\Q}\mathfrak a$, and let
$g_i\in X_{n,r,\wp_i}$.  In the unit-rank-zero cases considered in this paper,
\[
\p_N\bigl(f\in\mathscr P^0_{n,N}: f\equiv g_i\!\!\pmod{\wp_i}\ \forall i\bigr)
=R^{-n}+O_{K,n}\!\left(\frac{R^{-n+1/d}}{N}+\Delta_N\right).
\]
In particular, if $R^n\Delta_N=o(1)$ and $R^{1/d}=o(N)$, the residue classes are asymptotically equidistributed after conditioning on $G_f=S_n$.
\end{lemma}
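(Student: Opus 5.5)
The plan is to first prove the statement for the full box $\mathscr P_{n,N}(K)$ and then pass to $\mathscr P^0_{n,N}$ using the exceptional proportion $\Delta_N$.

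\emph{Step 1: the CRT reduction.} Since the $\wp_i$ are distinct, the Chinese remainder theorem gives $\mathcal O_K/\mathfrak a\cong\prod_i\mathcal O_K/\wp_i$. The congruence conditions $f\equiv g_i\pmod{\wp_i}$ for all $i$ are therefore equivalent to prescribing each of the $n$ non-leading coefficients $\alpha_j$ modulo $\mathfrak a$. Each $\alpha_j$ must lie in one fixed residue class $\beta_j+\mathfrak a$, and the $n$ coefficients vary independently in the box. So the count factors as a product of $n$ lattice-point counts.

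\emph{Step 2: counting one coefficient.} We must count $\alpha\in\beta+\mathfrak a$ whose coordinates with respect to the fixed integral basis lie in $[-N,N]^d$. The lattice $\mathfrak a\subset\mathcal O_K\cong\Z^d$ has covolume $R$. Its successive minima, measured in the basis coordinates, satisfy $\lambda_1\cdots\lambda_d\asymp R$. Because $\mathfrak a$ is an ideal, we also have $\lambda_1\gg_K R^{1/d}$: any nonzero $\alpha\in\mathfrak a$ has $|N_{K/\Q}\alpha|\ge R$, and $|N\alpha|\ll_K\|\alpha\|^d$. It follows that every $\lambda_i\asymp_K R^{1/d}$.

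The standard Davenport/Lipschitz count for a translated lattice in a box of side $2N$ then gives
\[
\#\{\alpha\in\beta+\mathfrak a:\operatorname{ht}(\alpha)\le N\}=\frac{(2N+1)^d}{R}+O_K\!\left(\frac{N^{d-1}}{R^{(d-1)/d}}+1\right),
\]
valid uniformly in $\beta$. When $R^{1/d}\ll N$ the term $1$ is absorbed, and the error is $\frac{(2N)^d}{R}\cdot O(R^{1/d}/N)$. This is the step where the hypothesis ``unit rank zero'' is used in the paper's framework: $K=\Q$ or imaginary quadratic, so the box is the natural Minkowski region and the comparison between norm and height is clean. Taking the product over the $n$ coefficients and dividing by $\#\mathscr P_{n,N}=(2N+1)^{dn}$ gives
\[
\p_N\bigl(f\equiv g_i\ \forall i\bigr)=R^{-n}\bigl(1+O(R^{1/d}/N)\bigr)^n=R^{-n}+O\bigl(R^{-n+1/d}/N\bigr).
\]
If instead $R^{1/d}\gg N$, the claimed error term dominates trivially: each coefficient count is $O(1+N^d/R)$, hence $O(N^{d-1}R^{1/d}/R)$ after renormalizing. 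So the formula holds in all ranges.

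\emph{Step 3: conditioning on $G_f=S_n$.} Write $A$ for the congruence event. Then
\[
\p_N(A\mid\mathscr P^0)=\frac{\p_N(A)-\p_N(A\setminus\mathscr P^0)}{1-\Delta_N}.
\]
Since $0\le\p_N(A\setminus\mathscr P^0)\le\Delta_N$ and $1-\Delta_N\ge\frac12$ for large $N$, this conditional probability equals $\p_N(A)+O(\Delta_N)$; in the expansion we use $\p_N(A)\le1$. Whether $\p_N$ is read as the conditional or the joint probability, the discrepancy is $O(\Delta_N)$, so the stated formula follows.

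The closing ``in particular'' clause is immediate. Under $R^n\Delta_N=o(1)$ and $R^{1/d}=o(N)$, both error terms are $o(R^{-n})$.

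\emph{Main obstacle.} I expect the hardest part to be the uniform lattice-point estimate with error $N^{d-1}R^{-(d-1)/d}$. This needs the lower bound $\lambda_1\gg R^{1/d}$ for ideal lattices, so that the lattice is not skewed. I would justify it through the norm inequality above, which uses only the embeddings of $K$. The result would then be routine for $d\le2$.
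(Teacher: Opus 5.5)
Your Steps 1 and 3, and Step 2 in the range $R^{1/d}\ll N$, coincide with the paper's proof. The Chinese remainder theorem gives one residue class modulo $\mathfrak a$ per coefficient. Each class meets the box in $(2N+1)^d/R+O_K(N^{d-1}R^{-1+1/d}+1)$ points. Conditioning on $G_f\cong S_n$ then costs $O(\Delta_N)$. Your successive-minima justification, using $\lambda_1\gg_K R^{1/d}$ from $|N_{K/\Q}\alpha|\ge R$ and $|N_{K/\Q}\alpha|\ll_K\|\alpha\|^d$, actually works for every $K$, because the box is taken in integral-basis coordinates. Unit rank zero is not what makes this step work; the paper simply treats $d=1$ and $d=2$ by hand.

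The step that fails is your treatment of $R^{1/d}\gg N$. The bound $O(1+N^d/R)=O(1)$ per coefficient is correct. It does not, however, give $\p_N\ll R^{-n+1/d}/N$. A class can contain exactly one admissible coefficient, so the probability can be of exact order $N^{-dn}$, and $N^{-dn}\ll R^{-n+1/d}/N$ holds if and only if $R^{1/d}\ll N$.

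Concretely, take $\Q$, $n=2$, a prime $p>2N+1$ and $g=X^2+1$. The congruence forces $f=X^2+1$, which is irreducible, so the probability is $\asymp N^{-2}$. Meanwhile $R^{-2}+R^{-1}/N\asymp 1/(pN)\to0$ as $p\to\infty$. Your intermediate claim $O(1)=O(N^{d-1}R^{-(d-1)/d})$ is also false in this range for $d\ge2$.

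What rescues the statement is the $\Delta_N$ term. For $n\ge2$, every $f$ with $\alpha_0=0$ is reducible, so $\Delta_N\ge(2N+1)^{-d}\gg N^{-dn}$. The paper avoids the case split by keeping the uniform product error $O(N^{dn-1}R^{-n+1/d}+N^{d(n-1)})$. The second term is $O(N^{-d})$ after normalisation, and it is exactly what has to be absorbed into $\Delta_N$ in this way. With that observation added, your argument is complete.
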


\begin{proof}
{
By the Chinese remainder theorem, the simultaneous congruences define one residue class modulo $\mathfrak a$ for each of the $n$ non-leading coefficients.  We first count one coefficient.

If $K=\Q$, then
\[
 \#\{a\in[-N,N]\cap\Z:a\equiv b\pmod R\}
 =\frac{2N+1}{R}+O(1).
\]
If $K$ is imaginary quadratic, embed $\mathcal O_K$ as a lattice in $\Co$.  The ideal $\mathfrak a$ is a sublattice of covolume $\asymp_K R$.  Moreover, every nonzero $\alpha\in\mathfrak a$ satisfies
$|N_{K/\Q}(\alpha)|\ge R$, and hence its Euclidean length is $\gg_K R^{1/2}$.  The standard planar lattice-point estimate for a translate of a lattice in a bounded Lipschitz region therefore gives
\[
 \#\{\alpha\in\mathcal O_K:H(\alpha)\le N,\ \alpha\equiv b\pmod{\mathfrak a}\}
 =\frac{(2N+1)^2}{R}+O_K\!\left(\frac{N}{R^{1/2}}+1\right).
\]
These two formulas may be written uniformly, for $d=1$ or $2$, as
\[
 \frac{(2N+1)^d}{R}+O_K\bigl(N^{d-1}R^{-1+1/d}+1\bigr).
\]
Taking the product over the $n$ coefficients yields
\[
 \#\{f\in\mathscr P_{n,N}:f\equiv g_i\pmod{\wp_i}\ \forall i\}
 =\frac{|\mathscr P_{n,N}|}{R^n}
 +O_{K,n}\bigl(N^{dn-1}R^{-n+1/d}+N^{d(n-1)}\bigr).
\]
Removing the exceptional polynomials costs at most
$|\mathscr P_{n,N}|\Delta_N$, by Theorem~\ref{thm1}.  Finally
$|\mathscr P^0_{n,N}|=|\mathscr P_{n,N}|(1+O(\Delta_N))$; division by this cardinality proves the stated probability estimate.
}
\end{proof}

\begin{prop}\label{p9} One has, for all primes $\wp$ with
$q_\wp<\min\{\Delta_N^{-1/(n+1)},N^{d/(2d+1)}\}$,
	\begin{enumerate}
		\item[$\paruno\ai\pardue$] $ \p_N(\mathbbm{1}_{f,r}(\wp)=1)=\mathbb{E}_N(\mathbbm{1}_{f,r}(\wp))=\delta(r)+\frac{C_{r}}{q_\wp}+O_{K,n}\Big(\frac{1}{q_\wp^2}+\frac{q_\wp^{1/d}}{N}+q_\wp^n\Delta_N\Big) $,\\for some explicit constant $ C_{r}$;
		\item[$\paruno\bi\pardue$] $ \sigma^2_N(\mathbbm{1}_{f,r}(\wp))=(\delta(r)-\delta(r)^2)+\frac{C_{r}(1-2\delta(r))}{q_\wp}+O_{K,n}\Big(\frac{1}{q_\wp^2}+\frac{q_\wp^{1/d}}{N}+q_\wp^n\Delta_N\Big) $. 
	\end{enumerate}It follows that, for
$x<\min\{\Delta_N^{-1/(n+1)},N^{d/(2d+1)}\}$,
	\begin{enumerate}
		\item[$\paruno\ci\pardue$]
		\[
		\mathbb{E}_N(\pi_{f,r}(x))
		=\delta(r)\pi_K(x)+C_r\log\log x
		+O_{K,n}\!\left(
		1+\frac{x^{1+1/d}}{N\log(2x)}
		+\frac{\Delta_Nx^{n+1}}{\log(2x)}
		\right).
		\]
	\end{enumerate}
	In particular, in the displayed range for $x$ the two additional errors are $O_{K,n}(1)$.
\end{prop}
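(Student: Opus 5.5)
The plan is to apply Lemma~\ref{l11} with $k=1$ and sum over residue classes. Write $\mathbbm{1}_{f,r}(\wp)=1$ exactly when $f\bmod\wp$ lies in $X_{n,r,\wp}$. Then
\[
\p_N(\mathbbm{1}_{f,r}(\wp)=1)=\sum_{g\in X_{n,r,\wp}}\p_N(f\equiv g\pmod\wp).
\]
Lemma~\ref{l11} with $R=q_\wp$ gives each summand as $q_\wp^{-n}+O(q_\wp^{-n+1/d}N^{-1}+\Delta_N)$.

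Summing over $|X_{n,r,\wp}|\le q_\wp^n$ classes yields
\[
\frac{|X_{n,r,\wp}|}{q_\wp^n}+O\Bigl(\frac{q_\wp^{1/d}}{N}+q_\wp^n\Delta_N\Bigr).
\]
The remaining input is the classical count of monic squarefree polynomials of degree $n$ over $\mathbb F_q$ with factorization type $r$. Write $r=(r_1,r_2,\ldots)$, with $r_j$ irreducible factors of degree $j$. Then
\[
|X_{n,r,\wp}|=\prod_j\binom{I_j(q)}{r_j},
\qquad
I_j(q)=\frac1j\sum_{e\mid j}\mu(e)q^{j/e}=\frac{q^j}{j}+O(q^{j/2}).
\]
Expanding the product as a polynomial in $q$ of degree $n$ gives $q^n\bigl(\delta(r)+C_rq^{-1}+O(q^{-2})\bigr)$. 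Here $\delta(r)=\prod_j 1/(j^{r_j}r_j!)$, and $C_r$ is read off from the next coefficient. The $q^{n-1}$ terms come from $\binom{I_j}{r_j}$ with $r_j\ge2$ (the $-\binom{r_j}{2}$ correction) and from the $-q^{j/2}/j$ part of $I_j$ when $j=2$. This proves (a), with an explicit $C_r$.

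For (b), the variable is an indicator, so $\sigma_N^2=\mathbb E_N(\mathbbm 1)-\mathbb E_N(\mathbbm 1)^2$. Square the expansion from (a): the cross term gives $2\delta(r)C_r/q_\wp$, and all other products fall into the same error terms, since the errors are $O(1)$ and bounded quantities multiply them. The hypotheses $q_\wp<N^{d/(2d+1)}$ and $q_\wp<\Delta_N^{-1/(n+1)}$ make the error terms $o(1)$, so no spurious cross products survive.

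For (c), use linearity, $\mathbb E_N(\pi_{f,r}(x))=\sum_{q_\wp\le x}\mathbb E_N(\mathbbm 1_{f,r}(\wp))$, and sum (a) over prime ideals of norm at most $x$. The main term gives $\delta(r)\pi_K(x)$. The term $C_r\sum 1/q_\wp$ equals $C_r\log\log x+O_K(1)$ by Mertens' theorem for $K$, and $\sum q_\wp^{-2}=O_K(1)$. The error sums follow from the prime ideal theorem bound $\#\{\wp:q_\wp\le t\}\ll t/\log t$ and partial summation:
\[
\sum_{q_\wp\le x} q_\wp^{1/d}\ll \frac{x^{1+1/d}}{\log 2x},
\qquad
\sum_{q_\wp\le x} q_\wp^{n}\ll \frac{x^{n+1}}{\log 2x}.
\]

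The last claim checks the range. If $x<\Delta_N^{-1/(n+1)}$, then $\Delta_Nx^{n+1}<1$. If $x<N^{d/(2d+1)}$, then $x^{1+1/d}=x^{(d+1)/d}<N^{(d+1)/(2d+1)}\le N$.

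The main obstacle is the combinatorial extraction of $C_r$ and the uniformity of the $O(q^{-2})$ remainder. Those $I_j(q)$ that are small for small $q$ can be absorbed into $O_{n}(1)$, because finitely many small primes contribute boundedly. The analytic steps are routine given Lemma~\ref{l11}.
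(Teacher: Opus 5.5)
Your proposal is correct and follows the paper's proof essentially step by step: Lemma~\ref{l11} applied to each class of $X_{n,r,\wp}$ and summed, the product-of-binomials count of $|X_{n,r,\wp}|$ via M\"obius inversion, the identity $\sigma_N^2=\mathbb E_N-\mathbb E_N^2$ for an indicator in (b), and Mertens' theorem plus partial summation over prime ideals for (c). One small slip in your description of $C_r$: the $-\binom{r_j}{2}$ correction inside $\binom{I_j}{r_j}$ reaches order $q_\wp^{n-1}$ only for $j=1$ (for $j\ge2$ its relative size is $q_\wp^{-j}$), so reading off the next coefficient gives $C_r=-\delta(r)\bigl(\binom{r_1}{2}+r_2\bigr)$.
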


\begin{proof}
	
	Once fixed a prime $ \wp $,
	\begin{align*}
		\mathbb{E}_N(\mathbbm{1}_{f,r}(\wp))&=\frac{1}{|\mathscr{P}_{n,N}^{0}|}\sum_{f\in\mathscr{P}_{n,N}^{0}}\mathbbm{1}_{f,r}(\wp)\\
		&=\frac{1}{|\mathscr{P}_{n,N}^{0}|}\underset{f\tiny{\mbox{ of splitting type }}r\tiny{\mbox{ mod }} \wp}{\sum_{f\in\mathscr{P}_{n,N}^{0}}}1\\
		&=\frac{1}{|\mathscr{P}_{n,N}^{0}|}\sum_{g\in X_{n,r,\wp}}\underset{f\equiv g\tiny\mbox{ mod }\wp}{\sum_{f\in\mathscr{P}_{n,N}^{0}}}1.
	\end{align*}

	On the other hand,$$|X_{n,r,\wp}|=\prod_{k=1}^{n}\binom{A_{q_\wp,k}}{r_k},$$where $ A_{q_\wp,k} $ is the number of degree-$ k $ irreducible polynomials in $ \mathbb{F}_{q_\wp}[X] $, which, by the M\"{o}bius inversion formula, equals$$\frac{1}{k}\sum_{d|k}\mu(d)q_{\wp}^{k/d}=\frac{q_\wp^k}{k}+O(q_\wp^{\alpha_k}),$$where $\alpha_k=1$ if $k=2$, and $\alpha_k< k-1$ if $k>2$. One has, for all $ k\ge2$\begin{align*}
		\binom{A_{q_\wp,k}}{r_k}&=\frac{A_{q_\wp,k}(A_{q_\wp,k}-1)\dots(A_{q_\wp,k}-r_k+1)}{r_k!}\\
		&=\frac{1}{r_k!}\Big(\frac{q_\wp^k}{k}+O(q_\wp^{\alpha_k})\Big)\dots\Big(\frac{q_\wp^k}{k}-r_k+1+O(q_\wp^{\alpha_k})\Big).
	\end{align*}It turns out that$$
	\binom{A_{q_\wp,k}}{r_k}=\begin{cases}
		\frac{1}{r_1!}q_\wp(q_\wp-1)\dots(q_\wp-r_1+1)&\mbox{ if }k=1\\
		\frac{1}{r_2!2^{r_2}}q_\wp^{2r_2}+C(r_2)q_\wp^{2r_2-1}+O(q_\wp^{2r_2-2})&\mbox{ if }k=2\\
		\frac{1}{r_k!k^{r_k}}q_\wp^{kr_k}+O(q_\wp^{k(r_k-1)+\alpha_k})&\mbox{ if }k>1.
	\end{cases}
	$$Hence\begin{multline*}
		|X_{n,r,\wp}|=\frac{1}{r_1!}q_\wp(q_\wp-1)\dots(q_\wp-r_1+1)\frac{1}{r_2!2^{r_2}}(q_\wp^{2r_2}+C(r_2)q_\wp^{2r_2-1}+O(q_\wp^{2r_2-2}))\\\prod_{k=3}^{n}(\frac{1}{r_k!k^{r_k}}q_\wp^{kr_k}+O(q_\wp^{k(r_k-1)+\alpha_k}))\\
		=\delta(r)q_\wp^{n}+C_rq_\wp^{n-1}+O(q_\wp^{n-2}),
	\end{multline*}where $ C_r=-\delta(r)C(r_2)\frac{(r_1+1)(r_1+2)}{2r_1!}.$
	
	By Lemma \ref{l11}, for $ q_\wp^{n+1}<\Delta_N^{-1}$,\begin{align*}
		\mathbb{E}_N(\mathbbm{1}_{f,r}(\wp))&=(\delta(r)q_\wp^n+C_{r}q_\wp^{n-1}+O(q_\wp^{n-2}))\Big(\frac{1}{q_\wp^n}+O_{K,n}\big(\frac{q_\wp^{-n+1/d}}{N}+\Delta_N\big)\Big)\\
		&=\delta(r)+\frac{C_{r}}{q_\wp}+O_{K,n}\Big(\frac{1}{q_\wp^2}+\frac{q_\wp^{1/d}}{N}+q_\wp^n\Delta_N\Big),
	\end{align*}which proves (a) and (b) follows by definition.
	
	For (c), sum the estimate in (a) over prime ideals of norm at most $x$.  By partial summation and the prime-ideal theorem,
	\[
	 \sum_{q_\wp\le x}\frac1{q_\wp}=\log\log x+O_K(1),\qquad
	 \sum_{q_\wp\le x}q_\wp^{1/d}\ll_K\frac{x^{1+1/d}}{\log(2x)},
	\]
	and
	\[
	 \sum_{q_\wp\le x}q_\wp^n\ll_{K,n}\frac{x^{n+1}}{\log(2x)}.
	\]
	This gives the stated error term.  Under
	$x<\min\{\Delta_N^{-1/(n+1)},N^{d/(2d+1)}\}$ both additional contributions are bounded.
\end{proof}

	\section{Linear polynomials over $K$}

For $ f\in\Z[X] $ an irreducible polynomial of degree $ n $, Cilleruelo's conjecture states$$\log(\lcm(f(1),\dots,f(M)))\sim(n-1)M\log M$$as $ M\rightarrow+\infty $, where $ \lcm(f(1),\dots,f(M)) $ is the least common multiple of $f(1),\dots,f(M)$. It is well known for $ n=1 $ by exploiting Dirichlet's Theorem for primes in arithmetic progression (see \cite{BKS} for a proof), to get an asymptotic estimate when $f(X)=Xk+h$, $(h,k)=1$:$$\psi_f(N)\sim N\frac{k}{\varphi(k)}\underset{(n,k)=1}{\sum_{1\le n\le k}}\frac{1}{n},$$as $N\rightarrow+\infty$.\\

We recall the definition of ray class group and we fix some notations. A \textbf{modulus} $\mathfrak{m}=\prod_{\wp}\wp^{\mathfrak{m}(\wp)}$ on $K$ is a function $\mathfrak{m}:\{\mbox{primes of }K\}\rightarrow\Z$ such that $$\begin{cases}
	\mathfrak{m}(\wp)\ge0\mbox{ for all }\wp,\\
	\mathfrak{m}(\wp)=0\mbox{ for all but finitely many }\wp,\\
	\mathfrak{m}(\wp)=0\mbox{ or }1\mbox{ if }\wp\mbox{ is real},\\
	\mathfrak{m}(\wp)=0\mbox{ if }\wp\mbox{ is complex}.
\end{cases}$$Let
$K_{\mathfrak{m},1}=\{a\in K^{\times}:\mbox{ord}_\wp(a-1)\ge\mathfrak{m}(\wp)\mbox{ for }\wp|\mathfrak{m}\mbox{ finite},\ a_\wp>0\mbox{ for }\wp|\mathfrak{m}\mbox{ real}\}$. The \textbf{ray class group modulo} $\mathfrak{m}$ is$$C_{\mathfrak{m}}=I^{S(\mathfrak{m})}/K_{\mathfrak{m},1},$$where $I^{S(\mathfrak{m})}$ is the free abelian group generated by the prime ideals not in $\mathfrak{m}$.\\
Let $\mathfrak{c}\in I^{S(\mathfrak{m})}$; the corresponding Chebyshev's function is$$\theta(x;\mathfrak{c},\mathfrak{m})=\underset{\wp\sim\mathfrak{c}\tiny\mbox{ in }C_{\mathfrak{m}}}{\sum_{N_{K/\Q}\wp\le x}}\log (N_{K/\Q}\wp)=\frac{x}{h_{\mathfrak{m}}}+o(x),$$as $x\rightarrow+\infty$, where $h_{\mathfrak{m}}$ is the \text{ray class number}, i.e. $h_{\mathfrak{m}}=|C_{\mathfrak{m}}|$.\\
This is equivalent to saying that the set of prime ideals $T$ of $K$ congruent to $\mathfrak{c}$ in $C_{\mathfrak{m}}$ has Dirichlet density $1/h_{\mathfrak{m}}$:$$\sum_{\wp\in T}\frac{1}{N_{K/\Q}\wp}\sim\frac{1}{h_{\mathfrak{m}}}\log\left(\frac{1}{s-1} \right),$$as $s\downarrow 1$.\\
To achieve explicit error terms, we want effective versions of the above asymptotic. We state here both a conditional and unconditional results proved by Lagarias and Odlyzko in 1977 \cite{LO}.
Let $L_{\mathfrak m}/K$ denote the ray class field modulo $\mathfrak m$.
\begin{thm}\label{thm3}
Assume GRH for the Dedekind zeta function $\zeta_{L_{\mathfrak m}}(s)$.  Then, for every ray class $\mathfrak c$,
\[
 \theta(x;\mathfrak c,\mathfrak m)
 =\frac{x}{h_{\mathfrak m}}
 +O_{K,\mathfrak m}\!\left(\sqrt{x}(\log x)^2\right).
\]
\end{thm}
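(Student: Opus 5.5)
The plan is to deduce the estimate from the effective Chebotarev theorem of Lagarias--Odlyzko \cite{LO}, applied to the abelian extension $L_{\mathfrak m}/K$. This requires three reductions.

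\textbf{Step 1: class field theory.} By class field theory, the Artin map gives an isomorphism $C_{\mathfrak m}\cong \operatorname{Gal}(L_{\mathfrak m}/K)=:G$, with $|G|=h_{\mathfrak m}$, and $L_{\mathfrak m}/K$ is unramified outside $\mathfrak m$. For a prime $\wp\nmid\mathfrak m$, the condition $\wp\sim\mathfrak c$ in $C_{\mathfrak m}$ holds exactly when the Frobenius $\sigma_\wp$ equals the element $\sigma_{\mathfrak c}\in G$. Since $G$ is abelian, the conjugacy class $C=\{\sigma_{\mathfrak c}\}$ has size $1$. Hence $\theta(x;\mathfrak c,\mathfrak m)$ is precisely the Chebotarev prime-counting function $\theta_C(x,L_{\mathfrak m}/K)$, restricted to unramified primes.

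\textbf{Step 2: the explicit formula under GRH.} I would follow Lagarias--Odlyzko directly. Write $\psi_C(x)=\sum_{N\wp^k\le x,\ \sigma_\wp^k\in C}\log N\wp$. Character orthogonality on $G$ gives
\[
\psi_C(x)=\frac{1}{h_{\mathfrak m}}\sum_{\chi}\bar\chi(\sigma_{\mathfrak c})\,\psi(x,\chi),
\]
where $\psi(x,\chi)$ is attached to the Hecke $L$-function $L(s,\chi)$. We have $\prod_\chi L(s,\chi)=\zeta_{L_{\mathfrak m}}(s)$ up to finitely many Euler factors at primes dividing $\mathfrak m$, so GRH for $\zeta_{L_{\mathfrak m}}$ places every nontrivial zero of every $L(s,\chi)$ on $\Re s=1/2$. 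The truncated explicit formula then gives
\[
\psi(x,\chi)=\delta_\chi x+O\Bigl(\sqrt{x}\log x\,\log\bigl(x^{n_L}d_L\bigr)\Bigr),
\]
where $n_L=[L_{\mathfrak m}:\Q]$ and $d_L$ is the absolute discriminant. The proof of this bound is a zero count in boxes $|\gamma-T|\le1$, giving $\ll\log(d_{\chi}(T+2)^{n_K})$ zeros there; one sums over $|\gamma|\le T$ and chooses $T=\sqrt{x}$. Since $K$ and $\mathfrak m$ are fixed, $n_L$ and $d_L$ are constants, so each error term is $O_{K,\mathfrak m}(\sqrt{x}(\log x)^2)$. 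Summing over the $h_{\mathfrak m}$ characters preserves this bound.

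\textbf{Step 3: from $\psi$ to $\theta$.} The prime powers $\wp^k$ with $k\ge2$ contribute $\ll_K\sqrt{x}\log x$. The ramified primes (those dividing $\mathfrak m$) number $O_{\mathfrak m}(1)$ and contribute $O_{\mathfrak m}(\log x)$. Removing both therefore stays within the error term.

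The main obstacle is Step 2: obtaining the explicit formula with uniform control of the zero count of each Hecke $L$-function in terms of its conductor, together with the treatment of the trivial character's pole. In practice I would cite Lagarias--Odlyzko's Theorem 1.1, which states $|\pi_C(x)-\frac{|C|}{|G|}\mathrm{Li}(x)|\ll \frac{|C|}{|G|}\sqrt{x}\log(d_Lx^{n_L})+\log d_L$. I would rather use their $\psi$/$\theta$ form of this result than the $\pi$ form, since converting from $\pi_C$ by partial summation would introduce an extra logarithm. With $|C|/|G|=1/h_{\mathfrak m}$ and all field invariants absorbed into the implied constant, this yields the stated bound.
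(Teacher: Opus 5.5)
Your proposal is correct and follows the paper, which simply cites Lagarias--Odlyzko's GRH form of the effective Chebotarev theorem, applied to the ray class field $L_{\mathfrak m}/K$ with class field theory identifying ray classes with Frobenius elements. One small correction: the $\pi_C$ form would also suffice, because partial summation from an error $O(\sqrt{x}\log x)$ gives exactly $O(\sqrt{x}(\log x)^2)$ for $\theta$, and that extra logarithm is already present in the stated bound.
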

\begin{thm}\label{thm4}
There is an effectively computable constant $C=C(K,\mathfrak m)>0$ such that
\[
 \theta(x;\mathfrak c,\mathfrak m)
 =\frac{x}{h_{\mathfrak m}}
 +O_{K,\mathfrak m}\!\left(xe^{-C\sqrt{\log x}}+x^{\beta_0}\right),
\]
where the term involving $\beta_0$ is omitted unless the relevant ray class field has an exceptional real zero.  If present, $\beta_0$ is real and simple.
\end{thm}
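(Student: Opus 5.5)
The statement is Theorem~\ref{thm4}, the unconditional effective Chebotarev/prime ideal theorem for ray classes. The plan is to deduce it from the Lagarias--Odlyzko effective Chebotarev theorem \cite{LO} applied to the abelian extension $L_{\mathfrak m}/K$.

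First I will use the Artin map. By class field theory, $\mathrm{Gal}(L_{\mathfrak m}/K)\cong C_{\mathfrak m}$, and a prime $\wp\nmid\mathfrak m$ lies in the class $\mathfrak c$ exactly when its Frobenius equals the element $\sigma_{\mathfrak c}$ corresponding to $\mathfrak c$. Since the group is abelian, the conjugacy class $C=\{\sigma_{\mathfrak c}\}$ has size $1$. Hence $\theta(x;\mathfrak c,\mathfrak m)$ coincides with the Chebotarev function $\theta_C(x)$ up to the finitely many ramified primes, which contribute $O_{K,\mathfrak m}(\log x)$.

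Next I will use orthogonality of ray class characters $\chi$ of $C_{\mathfrak m}$:
\[
\theta(x;\mathfrak c,\mathfrak m)=\frac1{h_{\mathfrak m}}\sum_{\chi}\overline{\chi}(\mathfrak c)\,\theta(x,\chi).
\]
The step from $\theta$ to $\psi$ (prime powers) costs only $O(\sqrt x\log x)$. For each Hecke $L$-function $L(s,\chi)$ I will apply the truncated explicit formula
\[
\psi(x,\chi)=\delta_\chi x-\sum_{|\gamma|\le T}\frac{x^\rho}{\rho}+O\!\left(\frac{x\log^2(xT)}{T}\right).
\]
These $L(s,\chi)$ are entire for $\chi\ne1$, and their product over all $\chi$ is $\zeta_{L_{\mathfrak m}}(s)$.

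The key analytic input is the classical zero-free region for $\zeta_{L_{\mathfrak m}}$: there is $c=c(K,\mathfrak m)>0$ such that $\zeta_{L_{\mathfrak m}}$ has no zeros in
\[
\sigma>1-\frac{c}{\log(|t|+3)},
\]
except possibly one simple real zero $\beta_0$. This zero, if it exists, belongs to $L(s,\chi_1)$ for a unique quadratic (real) character $\chi_1$. I will also use the zero-counting bound $N(T,\chi)\ll_{K,\mathfrak m}T\log T$. With these, the sum over nonexceptional zeros is bounded by
\[
x^{1-c/\log T}\log^2T .
\]

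Choosing $T=\exp(\sqrt{\log x})$ balances this against the truncation error. Both terms become $O(xe^{-C\sqrt{\log x}})$. The exceptional zero contributes $-\overline{\chi_1}(\mathfrak c)x^{\beta_0}/\beta_0=O(x^{\beta_0})$, which gives the stated shape. Because the field $L_{\mathfrak m}$ is fixed, all constants depend only on $K$ and $\mathfrak m$, so no uniformity in the discriminant is needed. This is simpler than the general Lagarias--Odlyzko statement, which tracks the dependence on the discriminant of the field.

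The main obstacle is the zero-free region itself, in particular showing that at most one exceptional zero exists and that it is real and simple. I would try the de la Vallée Poussin argument with the inequality $3+4\cos\theta+\cos2\theta\ge0$ applied to $\zeta_{L_{\mathfrak m}}$, or equivalently to the product of all $L(s,\chi)$. For the real zero, I would use the Landau-type argument with $\zeta_K(s)L(s,\chi_1)L(s,\chi_2)L(s,\chi_1\chi_2)$ to exclude two distinct exceptional zeros. In practice I would cite \cite{LO} for this step and verify only the reduction above.
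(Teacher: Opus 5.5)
Your proposal is correct and follows the paper's route. The paper gives no argument beyond attributing the statement to Lagarias--Odlyzko \cite{LO} applied to the ray class field $L_{\mathfrak m}/K$; your Artin-map identification $\mathrm{Gal}(L_{\mathfrak m}/K)\cong C_{\mathfrak m}$, followed by the character and explicit-formula sketch (with the zero-free region and the simple real exceptional zero taken from \cite{LO}), is the standard unpacking of that citation. One harmless imprecision: the product of the imprimitive $L(s,\chi)$ over characters of $C_{\mathfrak m}$ equals $\zeta_{L_{\mathfrak m}}(s)$ only up to finitely many Euler factors at primes dividing $\mathfrak m$, since the exact factorization uses primitive characters. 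Those factors only introduce zeros on $\Re s=0$, so nothing in your argument changes.
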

Let $\mathfrak{a}\in I^{S(\mathfrak{m})}$. Let $S(M)$ be the set of ideals of $K$ defined by$$S(M)=S_{\mathfrak{a},\mathfrak{m}}(M)=\{I\subseteq\mathcal{O}_K:N_{K/\Q}I\le M,\ I\sim\mathfrak{a}\mbox{ mod }C_{\mathfrak{m}}\}.$$
\begin{prop}\label{prop2}
		\begin{enumerate}
		\item[$\paruno \ai\pardue$] Assume the Generalized Riemann Hypothesis for $\zeta_K(s)$. Then$$\log|N_{K/\Q}(\lcm(S(M)))|=M\cdot\frac{1}{h_{\mathfrak{m}}}\sum_{\mathfrak{c}\in C_{\mathfrak{m}}}\frac{1}{N_{K/\Q}\mathfrak{c}}+O_{K,\mathfrak{m}}(\sqrt{M}(\log M)^2),$$as $M\rightarrow+\infty$, where $N_{K/\Q}\mathfrak{c}$ is the smallest norm of an integral ideal in the ray class group of $\mathfrak{c}$.
	\item[$\paruno \bi\pardue$] Let $\beta_0$ be the possible Siegel zero of $\zeta_K(s).$ Then$$\log|N_{K/\Q}(\lcm(S(M)))|=M\cdot\frac{1}{h_{\mathfrak{m}}}\sum_{\mathfrak{c}\in C_{\mathfrak{m}}}\frac{1}{N_{K/\Q}\mathfrak{c}}+O_{K,\mathfrak{m}}(Me^{-C\sqrt{\log M}}+M^{\beta_0}),$$as $M\rightarrow+\infty$, where the last error term is present if and only if $\beta_0$ exists.
	\end{enumerate}
\end{prop}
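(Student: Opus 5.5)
We follow the classical argument for $\operatorname{lcm}(1,\dots,M)$ and for linear polynomials $kX+h$, adapted to ideals. The plan is to compute the exponent of each prime ideal $\wp\nmid\mathfrak m$ in $\operatorname{lcm}(S(M))$, and then to sum $\log N_{K/\Q}\wp$ over the primes that actually occur. The finitely many primes dividing $\mathfrak m$ never divide an element of $S(M)$, since $S(M)\subseteq I^{S(\mathfrak m)}$.

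The first step is to find $v_\wp(\operatorname{lcm}(S(M)))$. It is the largest $e$ such that $\wp^e J\in S(M)$ for some integral ideal $J$. This requires $J\sim \mathfrak a\wp^{-e}$ in $C_{\mathfrak m}$ and $N(\wp)^eN(J)\le M$. For the class $\mathfrak c=[\mathfrak a\wp^{-e}]$, the minimal choice of $N(J)$ is $N_{K/\Q}\mathfrak c$ as defined in the statement. So $\wp^e$ divides the lcm exactly when $N(\wp)^e\le M/N_{K/\Q}[\mathfrak a\wp^{-e}]$.

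The second step splits the primes by $e$.
\begin{itemize}
\item For $e=1$ and $N\wp> \sqrt M$ (up to constants), the condition says that $\wp$ lies in a class $\mathfrak a\mathfrak c^{-1}$ with $N\wp\le M/N\mathfrak c$.
\item Summing over $\mathfrak c\in C_{\mathfrak m}$, the contribution is $\sum_{\mathfrak c}\theta(M/N\mathfrak c;\mathfrak a\mathfrak c^{-1},\mathfrak m)$, minus the part with $N\wp\le\sqrt M$, which is $O(\sqrt M)$.
\item Each $\wp$ is counted once, since its class determines $\mathfrak c$.
\item The primes with $N\wp\le \sqrt M$ and the higher powers $e\ge2$ contribute at most $\sum_{N\wp\le\sqrt M}\log M\ll \sqrt M$, by the prime ideal theorem.
\end{itemize}
Hence
\[
\log N_{K/\Q}(\operatorname{lcm}(S(M)))=\sum_{\mathfrak c\in C_{\mathfrak m}}\theta\Bigl(\frac{M}{N_{K/\Q}\mathfrak c};\mathfrak a\mathfrak c^{-1},\mathfrak m\Bigr)+O_{K,\mathfrak m}(\sqrt M\log M).
\]
As $\mathfrak c$ runs over $C_{\mathfrak m}$, the class $\mathfrak a\mathfrak c^{-1}$ also runs over all of $C_{\mathfrak m}$. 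Inserting Theorem~\ref{thm3} into each term gives (a), with error $O(\sqrt M(\log M)^2)$. Inserting Theorem~\ref{thm4} gives (b), with error $O(Me^{-C\sqrt{\log M}}+M^{\beta_0})$. In both cases there are $h_{\mathfrak m}$ terms, and the main term is $\frac{M}{h_{\mathfrak m}}\sum_{\mathfrak c}1/N\mathfrak c$.

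The main obstacle is the treatment of the small primes in the second step. For $N\wp\le\sqrt M$, the exponent $e$ depends on the interplay between classes and norms, so the formula is not just "$\wp$ occurs iff $N\wp\le M/N\mathfrak c$". We bound this part trivially rather than evaluate it. The point to check is that the crude bound $O(\sqrt M\log M)$ is absorbed into the GRH error.

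A second subtlety is the Siegel zero and GRH hypotheses. The statement refers to $\zeta_K$, while Theorems~\ref{thm3} and \ref{thm4} involve $\zeta_{L_{\mathfrak m}}$ or the Hecke $L$-functions of $C_{\mathfrak m}$. We would interpret the hypothesis accordingly and note that this adjustment is needed to invoke those theorems.
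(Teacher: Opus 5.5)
Your proposal is correct and is essentially the paper's argument. The paper first passes to the radical $T(M)$ at cost $O(\sqrt M\log M)$. It then characterizes the prime divisors via the minimal-norm integral representative of the class $[\mathfrak a\wp^{-1}]$, which is exactly your $e=1$ criterion, and sums the result as $\sum_{\mathfrak c}\theta(M/N\mathfrak c;\cdot,\mathfrak m)$. Your trivial bound for primes of norm at most $\sqrt M$ plays the role of that radical reduction. Your remark about the hypotheses is also apt: they should be GRH and the exceptional zero for $\zeta_{L_{\mathfrak m}}$ (equivalently, the ray class Hecke $L$-functions), not $\zeta_K$, since the paper's proof likewise just invokes Theorems~\ref{thm3} and~\ref{thm4}.
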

\begin{proof}
	Let $P(M)$ be the set of prime factors of $\lcm(S(M))$, that is$$P(M)=\{\wp\in\mathcal{O}_K:\wp\mbox{ divides at least one }I\subseteq S(M)\}.$$We are going to characterize the primes in the set $P(M)$. \\
	
	We have that$$N_{K/\Q}(\lcm(S(M)))=\prod_{\wp\in P(M)}q_\wp^{\beta_{\wp}(M)},$$where $\beta_{\wp}(M)$ is the highest power of $\wp$ dividing any element of $S(M)$. On the other hand, if we let $T(M)=\prod_{\wp\in P(M)}q_\wp$, it turns out that$$\frac{N_{K/\Q}(\lcm(S(M)))}{N_{K/\Q}(T(M))}=\prod_{\wp}q_\wp^{\beta_{\wp}(M)-1},$$where the product is over the primes $\wp$ whose square divides some element of $S(M)$. In particular, each of those $\wp$ has norm $q_\wp\le M$ and they are at most $\sqrt{M}$. Therefore$$\log|N_{K/\Q}(\lcm(S(M)))|-\log|N_{K/\Q}(T(M))|\ll\sqrt{M}\log M. $$Then it suffices to focus on $\log|N_{K/\Q}(T(M))|$.\\
	Note that if $I\in S(M)$, then $I\in S(\mathfrak{m})$, so it is coprime with $\mathfrak{m}$. Let now $\wp\supseteq I$ (i.e. $\wp\in P(M))$; then $\wp\equiv\mathfrak{c}$ in $C_{\mathfrak{m}}$ for some representative $\mathfrak{c}\in I^{S(\mathfrak{m})}$. In particular $\mathfrak{c}^{-1}\in I^{S(\mathfrak{m})}$; let then $\mathfrak{d}=\mathfrak{c}^{-1}\mathfrak{a}$, so that $\mathfrak{c}\mathfrak{d}\equiv\mathfrak{a}$ in $C_{\mathfrak{m}}$. We can take $\mathfrak{d}$ of smallest norm among the coset representatives. Since every class in $C_{\mathfrak{m}}$ is represented by an integral ideal, we can also assume $\mathfrak{d}$ integral ideal of smallest norm.\\
	Now, $\mathfrak{d}\wp\in S(M)$ is equivalent to $N_{K/\Q}(\mathfrak{d}\wp)=N_{K/\Q}\mathfrak{d}\cdot q_{\wp}\le M$. Hence $\wp\equiv\mathfrak{c}$ in $C_{\mathfrak{m}}$ is in $P(M)$ if and only if$$q_{\wp}\le\frac{M}{N_{K/\Q}\mathfrak{d}}.$$Denote by $U(M)$ the set of such primes.\\
	Finally,\begin{align*}		
		\log|N_{K/\Q}(T(M))|&=\sum_{\wp\in P(M)}\log q_\wp\\
		&=\sum_{\mathfrak{c}\in C_{\mathfrak{m}}}\sum_{\wp\in U(M)}\log q_{\wp}\\
		&=\sum_{\mathfrak{c}\in C_{\mathfrak{m}}}\theta\left( \tfrac{M}{N_{K/\Q}\mathfrak{d}};\mathfrak{c};\mathfrak{m}\right) \\
		&=\frac{M}{h_{\mathfrak{m}}}\sum_{\mathfrak{c}\in C_{\mathfrak{m}}}\frac{1}{N_{K/\Q}\mathfrak{c}}+o(M),
	\end{align*}where $N_{K/\Q}\mathfrak{c}$ is by definition the smallest norm of an integral ideal in the ray class group of $\mathfrak{c}$. The last equality holds because $\mathfrak{d}$ runs over $C_{\mathfrak{m}}$ as $\mathfrak{c}$ does.
	The result follows by applying either Theorem \ref{thm3} or Theorem \ref{thm4} to the last step.
	
\end{proof}

Let now the modulus $\mathfrak{m}$ be a "principal modulus", that is $\mathfrak{m}=(\nu)$, where $\nu\in\mathcal{O}_K$, and let $\alpha\in\mathcal{O}_K$ be coprime with $\nu$.
Consider the set of principal ideals$$S(M)=\{(\beta)\in\mathcal{O}_K:|N_{K/\Q}\beta|\le M,\ (\beta)\sim(\alpha)\mbox{ mod }C_{(\nu)}\}.$$
It is well-known that $(\beta)\sim(\alpha)$ mod $C_{(\nu)}$ is equivalent to say that there exist $a,b\in\mathcal{O}_K$, $a,b\neq0$, $a\equiv b\equiv1$ mod $\nu$ so that $(\beta)b=(\alpha)a$. It follows that$$(\beta)\sim(\alpha)\mbox{ mod }C_{(\nu)}\mbox{ if and only if }\beta\equiv\eta\alpha\mbox{ mod }\nu\mbox{ for some }\eta\in\mathcal{O}_K^{\times}.$$In particular, 
More precisely, the congruence condition gives the identity of sets of principal ideals
\[
S(M)=\left\{(f_{\eta}(\lambda)):\eta\in\mathcal O_K^\times,\ \lambda\in\mathcal O_K,\ |N_{K/\Q}(f_{\eta}(\lambda))|\le M\right\},
\]
where repetitions on the right are ignored and $f_{\eta}(X)=\eta\alpha+\nu X\in\mathcal O_K[X]$. Proposition~\ref{prop2} therefore gives the following corollary, which can be interpreted as a version of Cilleruelo's conjecture for linear polynomials over $K$.
\begin{coroll}
	\begin{multline*}
		\log N_{K/\Q}\!\left(\lcm\{f_{\eta}(\lambda):\eta\in\mathcal{O}_K^\times,\ \lambda\in\mathcal{O}_K,\ |N_{K/\Q}(f_\eta(\lambda))|\le M\}\right)\\=M\cdot\frac{1}{h_{(\nu)}}\sum_{\mathfrak{c}\in C_{\mathfrak{m}}}\frac{1}{N_{K/\Q}\mathfrak{c}}+o(M),
	\end{multline*}as $M\rightarrow+\infty$.
\end{coroll}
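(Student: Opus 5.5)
The plan is to reduce the corollary directly to Proposition~\ref{prop2} with $\mathfrak m=(\nu)$ and $\mathfrak a=(\alpha)$. The argument has three steps.

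\textbf{Step 1: the two least common multiples are the same ideal.} The lcm on the left is formed from elements $f_\eta(\lambda)$. I will read it as the lcm of the principal ideals $(f_\eta(\lambda))$, which is the only meaningful interpretation since elements are determined only up to units. Two elements generating the same ideal contribute the same ideal, so repetitions do not matter. By the set identity displayed just before the corollary, the ideals $(f_\eta(\lambda))$ with $|N_{K/\Q}(f_\eta(\lambda))|\le M$ are exactly the principal ideals $(\beta)$ with norm at most $M$ and $\beta\equiv\eta\alpha\pmod\nu$ for some unit $\eta$. By the stated characterization of ray-class equivalence for principal moduli, these are exactly the principal members of $S_{(\alpha),(\nu)}(M)$.

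I need to check one point here: every integral ideal $I$ with $I\sim(\alpha)$ in $C_{(\nu)}$ is principal. This holds because $K_{(\nu),1}$-equivalence refines ordinary ideal-class equivalence, and $(\alpha)$ is principal. Hence the principal ideals in question form the whole set $S_{(\alpha),(\nu)}(M)$ of Proposition~\ref{prop2}, and the two lcm's coincide.

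\textbf{Step 2: apply Proposition~\ref{prop2}(b).} This gives
\[
\log N_{K/\Q}\bigl(\lcm S(M)\bigr)=\frac{M}{h_{(\nu)}}\sum_{\mathfrak c\in C_{(\nu)}}\frac{1}{N_{K/\Q}\mathfrak c}+O\bigl(Me^{-C\sqrt{\log M}}+M^{\beta_0}\bigr),
\]
with $\mathfrak m=(\nu)$ in the sum. The exponent satisfies $\beta_0<1$ and is fixed in terms of $K$ and $\nu$, so both error terms are $o(M)$. Taking absolute values of norms is harmless, since ideal norms are positive.

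\textbf{Main obstacle.} The step needing the most care is the bookkeeping in Step~1, not any analysis. The corollary is phrased in terms of elements, while the ray-class condition is phrased modulo $K_{(\nu),1}$ and is sensitive to totally positive units and real places. I would state explicitly that $\mathfrak m=(\nu)$ has no infinite part. Then the equivalence $(\beta)\sim(\alpha)\iff\beta\equiv\eta\alpha\pmod\nu$ is exactly the one recorded in the paper, and no sign conditions enter. With that settled, the corollary is an immediate restatement of Proposition~\ref{prop2}.
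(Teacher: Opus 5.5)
Your proposal is correct and follows the paper's route. The unit-twisted congruence characterization identifies the ideals $(f_\eta(\lambda))$ with the ray-class set $S_{(\alpha),(\nu)}(M)$, and Proposition~\ref{prop2} then gives the asymptotic with an $o(M)$ error (part (b) suffices). Your explicit check that every integral ideal in the ray class of $(\alpha)$ is principal fills in a step the paper leaves implicit: it guarantees that the set of principal ideals is the full set to which Proposition~\ref{prop2} applies.
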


\begin{flushleft}
	\textbf{Example:} In the ring of 
	Eisenstein integers $\Z[\omega]=\mathcal{O}_{\Q(\omega)}$, $w=e^{2\pi i/3}$, all ideals are principal. In the above notation, for principal ideals the ray-class condition is equivalent to
$\alpha\equiv\eta\beta\pmod{\nu}$ for some unit
$\eta=\pm\omega^j\in\mathbb Z[\omega]^\times$, $j=0,1,2$.
	In this case we get
\end{flushleft}\begin{multline*}
\log|N_{K/\Q}(\lcm(\eta\alpha+\nu\lambda:\eta\in\{\pm1,\pm\omega,\pm\omega^2\},\  \lambda\in\mathcal{O}_K,\ |N_{K/\Q}(\eta\alpha+\nu\lambda)|\le M))|\\=M\cdot\frac{1}{h_{(\nu)}}\sum_{\mathfrak{c}\in C_{\mathfrak{m}}}\frac{1}{N_{K/\Q}\mathfrak{c}}+o(M),
\end{multline*}as $M\rightarrow+\infty$.

\section{The averaged quadratic case}

For $n=2$, the local congruence statements of Sections~2--3 remain valid, and the condition $G_f\cong S_2$ is simply irreducibility.  However, the density-one least-common-multiple asymptotic requires the same global dispersion and large-prime collision estimates as the higher-degree argument.  Since those estimates are not consequences of Lemma~\ref{l11} in the full range $p,q\le M$, we do not state an unconditional averaged quadratic theorem here.  The conditional averaged theorem in Section~\ref{sec:conditional-average} applies verbatim with $n=2$.

The individual quadratic problem is different.  For one fixed quadratic polynomial, Cilleruelo's argument over $\mathbb Q$ uses equidistribution of roots of quadratic congruences.  Section~\ref{sec:quadratic-individual} formulates the corresponding number-field input and proves the reduction to that analytic statement.
\section{An individual quadratic theorem over a number field: reduction to modular-root equidistribution}\label{sec:quadratic-individual}

The purpose of this section is to formulate and prove, up to one clearly isolated analytic input, a number-field analogue of Cilleruelo's theorem for one fixed irreducible quadratic polynomial.  This is substantially different from the averaged quadratic theorem above: here the polynomial is fixed and the averaging over its coefficients is no longer available.  The argument therefore has to recover, over a number field, the role played over $\Q$ by the Duke--Friedlander--Iwaniec--T\'oth equidistribution theorem for roots of quadratic congruences.

\subsection{The sampling region and a necessary finiteness correction}

For a general number field the set
\[
 \{\lambda\in\mathcal O_K:|N_{K/\Q}(\lambda)|\le M\}
\]
is not finite when the unit rank of $K$ is positive.  Indeed, every unit has norm $\pm1$, and the unit group is infinite unless $K=\Q$ or $K$ is imaginary quadratic.  Thus an individual theorem over an arbitrary number field must use a finite archimedean sampling region.

Let $r_1$ and $r_2$ be the numbers of real and pairs of complex embeddings of $K$, so that $d=r_1+2r_2$.  We write
\[
 \iota:K\longrightarrow K_\infty:=\R^{r_1}\times\Co^{r_2}\simeq\R^d
\]
for the Minkowski embedding.  Fix a bounded Jordan-measurable set $\Omega\subset K_\infty$ with nonempty interior and Lipschitz boundary.  For $T\ge1$ put
\[
 \Lambda_\Omega(T)=\{\lambda\in\mathcal O_K:\iota(\lambda)\in T\Omega\},
 \qquad X=X_\Omega(T):=|\Lambda_\Omega(T)|.
\]
By the Lipschitz principle,
\begin{equation}\label{eq:lattice-count-main}
 X_\Omega(T)=c_{K,\Omega}T^d+O_{K,\Omega}(T^{d-1}),
 \qquad
 c_{K,\Omega}=\frac{\operatorname{vol}(\Omega)}{\operatorname{covol}(\iota(\mathcal O_K))}.
\end{equation}
In particular, $\log X=d\log T+O_{K,\Omega}(1)$.

Let
\[
 f(Z)=aZ^2+bZ+c\in\mathcal O_K[Z]
\]
be irreducible over $K$, with $a\ne0$ and discriminant $\Delta=b^2-4ac\ne0$.  We define the ideal
\[
 \mathcal L_f(T)=\operatorname{lcm}\{(f(\lambda)): \lambda\in\Lambda_\Omega(T),\ f(\lambda)\ne0\}
\]
and write
\[
 \Psi_f(T)=\log N_{K/\Q}\mathcal L_f(T).
\]
The analogue suggested by Cilleruelo's theorem is
\begin{equation}\label{eq:nf-cilleruelo-target}
 \Psi_f(T)=X\log X+B_{f,K,\Omega}X+o(X).
\end{equation}
The coefficient of the leading term is independent of $f$.  Since $X\asymp T^d$, the term $X\log X$ is the same as $dX\log T+O(X)$.

\subsection{Local notation and the product--LCM identity}

For a nonzero prime ideal $\mathfrak p$ of $\mathcal O_K$, let
\[
 q_{\mathfrak p}=N_{K/\Q}\mathfrak p.
\]
For $k\ge1$ define
\[
 s_f(\mathfrak p^k)=\#\{x\in\mathcal O_K/\mathfrak p^k:f(x)\equiv0\pmod{\mathfrak p^k}\}.
\]
Put
\[
 \alpha_{\mathfrak p}(T)=\sum_{\lambda\in\Lambda_\Omega(T)}v_{\mathfrak p}(f(\lambda)),
 \qquad
 \beta_{\mathfrak p}(T)=\max_{\lambda\in\Lambda_\Omega(T)}v_{\mathfrak p}(f(\lambda)).
\]
Then
\[
 \prod_{\lambda\in\Lambda_\Omega(T)}(f(\lambda))
   =\prod_{\mathfrak p}\mathfrak p^{\alpha_{\mathfrak p}(T)},
 \qquad
 \mathcal L_f(T)=\prod_{\mathfrak p}\mathfrak p^{\beta_{\mathfrak p}(T)},
\]
and hence
\begin{equation}\label{eq:product-lcm-nf}
 \Psi_f(T)
 =\sum_{\lambda\in\Lambda_\Omega(T)}\log|N_{K/\Q}f(\lambda)|
  +\sum_{\mathfrak p}(\beta_{\mathfrak p}(T)-\alpha_{\mathfrak p}(T))\log q_{\mathfrak p}.
\end{equation}

Let $\mathfrak S_f$ be the finite set of prime ideals dividing $2a\Delta$.  If $\mathfrak p\notin\mathfrak S_f$, the reduction of $f$ modulo $\mathfrak p$ is separable and
\begin{equation}\label{eq:root-count-unramified}
 s_f(\mathfrak p^k)=s_f(\mathfrak p)\in\{0,2\}
 \qquad(k\ge1),
\end{equation}
by Hensel's lemma.  Moreover $s_f(\mathfrak p)=2$ precisely when $\mathfrak p$ splits in the quadratic extension $K(\sqrt\Delta)/K$.  Thus Chebotarev gives
\begin{equation}\label{eq:split-prime-cheb}
 \sum_{q_{\mathfrak p}\le y}s_f(\mathfrak p)\log q_{\mathfrak p}=y+o(y).
\end{equation}

For an integral ideal $\mathfrak a$ and a residue class $u\pmod{\mathfrak a}$, the Lipschitz principle applied to the sublattice $\iota(\mathfrak a)$ gives, uniformly in $u$,
\begin{equation}\label{eq:coset-lattice-count}
 \#\{\lambda\in\Lambda_\Omega(T):\lambda\equiv u\pmod{\mathfrak a}\}
 =\frac{X}{N\mathfrak a}
 +O_{K,\Omega}\!\left(\frac{T^{d-1}}{(N\mathfrak a)^{(d-1)/d}}+1\right).
\end{equation}
Consequently,
\begin{lemma}\label{lem:alpha-local-nf}
For every prime ideal $\mathfrak p$,
\begin{align}
 \alpha_{\mathfrak p}(T)
 &=X\sum_{k\ge1}\frac{s_f(\mathfrak p^k)}{q_{\mathfrak p}^k}
 \notag\\
 &\quad+O_{K,\Omega,f}\!\left(
 T^{d-1}\sum_{k\le C_f\log T/\log q_{\mathfrak p}}
 \frac{s_f(\mathfrak p^k)}{q_{\mathfrak p}^{k(d-1)/d}}
 +\frac{\log T}{\log q_{\mathfrak p}}\right).
 \label{eq:alpha-local-nf}
\end{align}
If $\mathfrak p\notin\mathfrak S_f$, this becomes
\begin{equation}\label{eq:alpha-good-nf}
 \alpha_{\mathfrak p}(T)
 =X\frac{s_f(\mathfrak p)}{q_{\mathfrak p}-1}
 +O_{K,\Omega,f}\!\left(
 \frac{T^{d-1}}{q_{\mathfrak p}^{(d-1)/d}}+\frac{\log T}{\log q_{\mathfrak p}}
 \right).
\end{equation}
\end{lemma}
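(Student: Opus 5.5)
We write the valuation as a layer-cake sum and count each layer with \eqref{eq:coset-lattice-count}. For $\lambda$ with $f(\lambda)\ne0$,
\[
 v_{\mathfrak p}(f(\lambda))=\sum_{k\ge1}\mathbbm 1\bigl[f(\lambda)\equiv0\pmod{\mathfrak p^k}\bigr],
\]
so
\[
 \alpha_{\mathfrak p}(T)=\sum_{k\ge1}\#\{\lambda\in\Lambda_\Omega(T):f(\lambda)\equiv0\pmod{\mathfrak p^k}\}.
\]
The condition $f(\lambda)\equiv0\pmod{\mathfrak p^k}$ is a union of $s_f(\mathfrak p^k)$ residue classes $u\pmod{\mathfrak p^k}$. (If $f(\lambda)=0$ for some lattice point, it cannot happen since $f$ is irreducible of degree $2$.)

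\textbf{Truncation.} Every $\lambda\in\Lambda_\Omega(T)$ has all archimedean embeddings $\ll_\Omega T$. Hence $0<|N_{K/\Q}f(\lambda)|\ll_{f,\Omega}T^{2d}$, so $q_{\mathfrak p}^{v_{\mathfrak p}(f(\lambda))}\le|N f(\lambda)|$ forces $v_{\mathfrak p}(f(\lambda))\le C_f\log T/\log q_{\mathfrak p}$ for a suitable constant $C_f$. Thus only $k\le k_0:=C_f\log T/\log q_{\mathfrak p}$ contribute. For each such $k$, \eqref{eq:coset-lattice-count} gives
\[
 s_f(\mathfrak p^k)\Bigl(\frac{X}{q_{\mathfrak p}^k}+O\bigl(T^{d-1}q_{\mathfrak p}^{-k(d-1)/d}+1\bigr)\Bigr).
\]
Summing over $k\le k_0$ yields the main term $X\sum_{k\le k_0}s_f(\mathfrak p^k)q_{\mathfrak p}^{-k}$ and the first error term in \eqref{eq:alpha-local-nf}. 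The $O(1)$ terms add up to $\sum_{k\le k_0}s_f(\mathfrak p^k)$.

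\textbf{Bounding the $O(1)$ terms.} This is the main obstacle, because for ramified or bad primes $s_f(\mathfrak p^k)$ can grow with $k$. For $\mathfrak p\notin\mathfrak S_f$ we have $s_f\le2$ by \eqref{eq:root-count-unramified}, and the sum is $\ll\log T/\log q_{\mathfrak p}$. For the finitely many $\mathfrak p\in\mathfrak S_f$ we use the standard fact that for a quadratic with nonzero discriminant, $s_f(\mathfrak p^k)\ll_f 1$ uniformly in $k$. This holds because roots mod $\mathfrak p^k$ lie within $\mathfrak p$-adic distance governed by $v_{\mathfrak p}(\Delta)$ and $v_{\mathfrak p}(a)$ of the two roots in $\overline{K_{\mathfrak p}}$. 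So the count is $\le C(f,\mathfrak p)$, and the sum is again $\ll_f\log T/\log q_{\mathfrak p}$. If one prefers to avoid this local fact, the constants for $\mathfrak p\in\mathfrak S_f$ may be absorbed, since there are finitely many such primes and each satisfies $s_f(\mathfrak p^k)\le q_{\mathfrak p}^{\min(k,\,v_{\mathfrak p}(a\Delta))}\cdot 2$, which is bounded.

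\textbf{Completing the main term and the good-prime case.} The tail $X\sum_{k>k_0}s_f(\mathfrak p^k)q_{\mathfrak p}^{-k}$ is $\ll_f Xq_{\mathfrak p}^{-k_0}\ll X T^{-C_f}$. Choosing $C_f\ge d$ makes this $O(1)$, which is absorbed into the stated error, so the main term may be completed to the full series. Finally, for $\mathfrak p\notin\mathfrak S_f$, inserting $s_f(\mathfrak p^k)=s_f(\mathfrak p)$ gives
\[
 \sum_{k\ge1}q_{\mathfrak p}^{-k}=\frac{1}{q_{\mathfrak p}-1},
\]
and the first error sum is a geometric series dominated by its $k=1$ term, namely $\ll T^{d-1}q_{\mathfrak p}^{-(d-1)/d}$. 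When $d=1$ this term reads $T^0$ times a count of $k$, which is already covered by $\log T/\log q_{\mathfrak p}$. This proves \eqref{eq:alpha-good-nf}.
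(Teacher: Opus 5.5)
Your proposal is correct and follows the paper's own proof. Both arguments use the layer-cake decomposition of $v_{\mathfrak p}$, apply \eqref{eq:coset-lattice-count} to each root class modulo $\mathfrak p^k$, truncate at $k\le C_f\log T/\log q_{\mathfrak p}$ via $|Nf(\lambda)|\ll T^{2d}$, and sum a geometric series for $\mathfrak p\notin\mathfrak S_f$. Your extra details fill in what the paper leaves implicit: the uniform boundedness of $s_f(\mathfrak p^k)$ at the finitely many $\mathfrak p\in\mathfrak S_f$, which controls the accumulated $O(1)$ terms, and the remark on $d=1$.

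There is one small slip in the tail estimate. When $q_{\mathfrak p}>T^{C_f}$, the term $\log T/\log q_{\mathfrak p}$ can be arbitrarily small, so an $O(1)$ tail is not absorbed into it. In that range, however, the tail is the whole series, which is $\ll X/q_{\mathfrak p}\ll\log T/\log q_{\mathfrak p}$, so the stated bound still holds.
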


\begin{proof}
Write
\[
 v_{\mathfrak p}(f(\lambda))=\sum_{k\ge1}\mathbf 1_{\mathfrak p^k\mid f(\lambda)}
\]
and interchange the two sums.  For every root class modulo $\mathfrak p^k$, apply \eqref{eq:coset-lattice-count}.  Since $|Nf(\lambda)|\ll_{f,\Omega}T^{2d}$, only $k\ll_f\log T/\log q_{\mathfrak p}$ can occur.  Formula \eqref{eq:alpha-good-nf} follows from \eqref{eq:root-count-unramified} and summation of a geometric series.
\end{proof}

\subsection{The archimedean product}

The first term of \eqref{eq:product-lcm-nf} has the expected size $2X\log X$.

\begin{lemma}\label{lem:arch-product-nf}
Assume that $0\notin\partial\Omega$ and that the zero set of each archimedean component of $f$ has measure zero in $\Omega$.  Then there is a constant $A_{f,K,\Omega}$ such that
\begin{equation}\label{eq:arch-product-nf}
 \sum_{\lambda\in\Lambda_\Omega(T)}\log|N_{K/\Q}f(\lambda)|
 =2X\log X+A_{f,K,\Omega}X+o(X).
\end{equation}
\end{lemma}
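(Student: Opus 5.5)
The plan is to factor the norm into archimedean components and treat the resulting sum as a Riemann sum. For $\lambda\in\mathcal O_K$ write $\iota(\lambda)=(\lambda_\sigma)_\sigma$, where $\sigma$ runs over the $d$ embeddings, with complex ones counted twice. Then
\[
 \log|N_{K/\Q}f(\lambda)|=\sum_{\sigma}\log|f^\sigma(\lambda_\sigma)|,
\]
where $f^\sigma(z)=\sigma(a)z^2+\sigma(b)z+\sigma(c)$. Write $\lambda_\sigma=Tu_\sigma$ with $u=\iota(\lambda)/T\in\Omega$. This gives
\[
 \log|f^\sigma(Tu_\sigma)|=2\log T+\log|\sigma(a)|+\log|u_\sigma^2+T^{-1}\beta_\sigma u_\sigma+T^{-2}\gamma_\sigma|,
\]
with $\beta_\sigma=\sigma(b/a)$ and $\gamma_\sigma=\sigma(c/a)$. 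Summing over $\sigma$ and over $\lambda\in\Lambda_\Omega(T)$, the main term is $2dX\log T+X\log|N_{K/\Q}a|$. By \eqref{eq:lattice-count-main}, $2dX\log T=2X\log X-2X\log c_{K,\Omega}+O(X/T)$. It therefore suffices to show
\[
 \frac1X\sum_{\lambda\in\Lambda_\Omega(T)}\sum_\sigma\log|g_{\sigma,T}(u_\sigma)|\longrightarrow \frac{1}{\operatorname{vol}\Omega}\int_\Omega\sum_\sigma 2\log|u_\sigma|\,du,
\]
where $g_{\sigma,T}(z)=z^2+T^{-1}\beta_\sigma z+T^{-2}\gamma_\sigma$. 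The integral is finite, since $\log|u_\sigma|$ is locally integrable. The constant is then $A=\log|Na|-2\log c_{K,\Omega}+(\operatorname{vol}\Omega)^{-1}\int_\Omega 2\log|N(u)|\,du$. I will take this as the limit; using the exact component $f^\sigma$ instead would give the same constant up to $o(1)$.

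To prove the convergence I split into a truncation step and a singular step. Fix $\varepsilon>0$ and let $E_\varepsilon=\{u\in\overline\Omega:\min_\sigma|u_\sigma|<\varepsilon\}$.

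On $\Omega\setminus E_\varepsilon$ the function $\log|g_{\sigma,T}|$ converges uniformly to $2\log|u_\sigma|$ as $T\to\infty$, and it is Lipschitz there. Equidistribution of the points $\iota(\mathcal O_K)/T$ in $\Omega$ (the Lipschitz principle applied to Jordan-measurable subregions) then gives the Riemann-sum limit. This handles all $\lambda$ away from the coordinate hyperplanes. The hypothesis on the zero sets is what makes $E_\varepsilon$ a Jordan set of measure $\to0$. Note also that the zeros of $g_{\sigma,T}$ shrink to $u_\sigma=0$.

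The main obstacle is the contribution of $\lambda$ with some $|\lambda_\sigma|<\varepsilon T$. There $\log|f^\sigma(\lambda_\sigma)|$ can be very negative, or large, for individual $\lambda$. Positivity from above is easy: $\log|f^\sigma(\lambda_\sigma)|\le 2\log T+O(1)$, and the number of such $\lambda$ is $\ll\varepsilon X$. I must also bound the sum of $\log^-|f^\sigma(\lambda_\sigma)|$.

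The key point is arithmetic. Since $f(\lambda)\ne0$ is a nonzero algebraic integer, $|N f(\lambda)|\ge1$. Hence $\sum_\sigma\log|f^\sigma(\lambda_\sigma)|\ge0$, and the negative part at one place is compensated by the others, which are each $\le 2\log T+O(1)$. That alone is too crude, so I would instead bound directly. Fix $\sigma$ and count $\lambda$ with $|f^\sigma(\lambda_\sigma)|\le e^{-j}T^2$. Such $\lambda_\sigma$ lie within $\ll Te^{-j/2}$ of a root of $f^\sigma$, which is a slab of relative volume $\ll e^{-j/2}$ (or $e^{-j}$ at complex places). By \eqref{eq:coset-lattice-count}-type lattice counting, this set contains $\ll X e^{-j/2}+T^{d-1}$ points. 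The additive error $T^{d-1}$ is only relevant once $e^{-j/2}<1/T$, that is $j\gg\log T$.

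For such deep $j$, the lower bound $|Nf(\lambda)|\ge1$ caps the depth at $j\ll\log T$. So the total contribution is
\[
 \sum_{j\le C\log T}\bigl(Xe^{-j/2}+T^{d-1}\bigr)\ll X\varepsilon^{1/2}+T^{d-1}\log T=O(\varepsilon^{1/2}X)+o(X).
\]
Combining this with the uniform-convergence region and letting $\varepsilon\to0$ gives \eqref{eq:arch-product-nf}.
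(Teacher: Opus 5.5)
Your proposal is correct, and its skeleton is the same as the paper's. You extract $2d\log T+\log|N_{K/\Q}(a)|$, treat the remainder as a lattice Riemann sum converging to the average of $\sum_\sigma 2\log|u_\sigma|$ over $\Omega$, and control a neighbourhood of the singular set. You also arrive at the same constant $A_{f,K,\Omega}$.

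Where you genuinely differ is the singular step. The paper truncates the logarithm at $-R$ and asserts that the truncation error is $o(X)$ because the logarithmic singularities are locally integrable. But integrability of the limit function only controls the integral, not the lattice sum, which could a priori be spoiled by lattice points lying extremely close to a root of some $f^\sigma$. Your argument supplies the missing input in two parts. First, slab counts give $\ll Xe^{-j/2}+T^{d-1}$ points at depth $j$. Second, the arithmetic floor $|N_{K/\Q}f(\lambda)|\ge1$, which comes from irreducibility, caps the depth at $j\le 2d\log T+O(1)$. Hence the additive errors total only $O(T^{d-1}\log T)=o(X)$.

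Your route also shows that the hypotheses $0\notin\partial\Omega$ and the measure-zero condition (which is automatic anyway) play no role. The bound $\operatorname{vol}(E_\varepsilon)\to0$ holds simply because $E_\varepsilon$ lies in $\varepsilon$-neighbourhoods of coordinate hyperplanes, not because of the zero-set hypothesis.

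There are two bookkeeping slips. First, summing $Xe^{-j/2}$ from $j=0$ gives $O(X)$, not $O(\varepsilon^{1/2}X)$. If you restrict to $|u_\sigma|<\varepsilon$ and bound the depth-$j$ layer by $\min(\varepsilon X,Xe^{-j/2})+T^{d-1}$, you get $O(\varepsilon\log(1/\varepsilon)X)+O(T^{d-1}\log T)$, which still suffices. Second, the upper bound should be applied to $\log|g_{\sigma,T}|\le O(1)$ rather than to $\log|f^\sigma|\le 2\log T+O(1)$. Otherwise the positive part on $E_\varepsilon$ would only be $O(\varepsilon X\log T)$.
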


\begin{proof}
For $z\in\Omega$ away from the finitely many archimedean zero hypersurfaces,
\[
 \log|Nf(Tz)|=2d\log T+\log|N(a)|+2\log|N(z)|+o(1),
\]
where the expression is interpreted componentwise under the Minkowski embedding.  Truncate the logarithm below at height $-R$, apply lattice-point Riemann summation to the bounded truncated function, and then let $R\to\infty$.  The logarithmic singularities are locally integrable, so the truncation error is $o(X)$.  Using $d\log T=\log X-\log c_{K,\Omega}+o(1)$ from \eqref{eq:lattice-count-main} yields \eqref{eq:arch-product-nf}.  An explicit admissible expression is
\[
 A_{f,K,\Omega}
 =\log|N(a)|-2\log c_{K,\Omega}
 +\frac{1}{\operatorname{vol}(\Omega)}
   \int_\Omega 2\log|N_{K_\infty/\R}(z)|\,dz,
\]
with the evident modification when the origin lies outside the scaling centre or when $\Omega$ is translated.
\end{proof}

\subsection{Small, medium and large prime ideals}

Put $Y=X^{2/3}$.  We call $\mathfrak p$ small if $q_{\mathfrak p}<Y$, medium if $Y\le q_{\mathfrak p}\le C_fX$, and large if $q_{\mathfrak p}>C_fX$, where $C_f$ is fixed sufficiently large.  The exact value of $C_f$ affects intermediate constants but not the final statement.

For small prime ideals, Lemma~\ref{lem:alpha-local-nf}, the prime ideal theorem and partial summation give
\begin{equation}\label{eq:small-primes-nf}
 \sum_{q_{\mathfrak p}<Y}\alpha_{\mathfrak p}(T)\log q_{\mathfrak p}
 =X\sum_{q_{\mathfrak p}<Y}
   \left(\sum_{k\ge1}\frac{s_f(\mathfrak p^k)}{q_{\mathfrak p}^k}\right)
   \log q_{\mathfrak p}+O_{f,K,\Omega}(X^{2/3+o(1)}).
\end{equation}
The corresponding sum of $\beta_{\mathfrak p}$ is $O(Y\log T)=O(X^{2/3}\log X)$.

The following elementary estimate replaces the one-dimensional identity
$f(j)-f(i)=(j-i)(a(i+j)+b)$ used by Cilleruelo.

\begin{lemma}[collision ideal]\label{lem:collision-nf}
For $\lambda,\mu\in\mathcal O_K$,
\[
 f(\lambda)-f(\mu)=(\lambda-\mu)(a(\lambda+\mu)+b).
\]
Hence, if a prime ideal $\mathfrak p\nmid 2a\Delta$ divides both $f(\lambda)$ and $f(\mu)$, then
\[
 \lambda\equiv\mu\pmod{\mathfrak p}
 \quad\text{or}\quad
 a(\lambda+\mu)+b\equiv0\pmod{\mathfrak p}.
\]
In particular, for fixed $\lambda\ne\mu$, the product of all such collision prime ideals divides the principal ideal generated by
\[
 (\lambda-\mu)(a(\lambda+\mu)+b).
\]
\end{lemma}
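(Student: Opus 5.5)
The identity is a direct expansion. Write $f(\lambda)-f(\mu)=a(\lambda^2-\mu^2)+b(\lambda-\mu)$; the constant term $c$ cancels. Factoring out $\lambda-\mu$ gives $(\lambda-\mu)(a(\lambda+\mu)+b)$. This is valid in any commutative ring, in particular in $\mathcal O_K$.

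For the congruence statement, suppose $\mathfrak p\nmid 2a\Delta$ divides both $f(\lambda)$ and $f(\mu)$. Then $\mathfrak p$ divides $f(\lambda)-f(\mu)$, so $\mathfrak p$ contains the product $(\lambda-\mu)(a(\lambda+\mu)+b)$. Since $\mathfrak p$ is prime, it contains one of the two factors, which is exactly the dichotomy claimed. The hypothesis $\mathfrak p\nmid 2a\Delta$ is not needed for this step.

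For the final assertion, fix $\lambda\ne\mu$. Each collision prime $\mathfrak p$ divides $D:=(\lambda-\mu)(a(\lambda+\mu)+b)$, by the previous step. I read "the product of all such collision prime ideals" as the product of the distinct such primes. Distinct prime ideals are pairwise coprime, so if each divides the ideal $(D)$, their product (their radical) divides $(D)$ by unique factorization of ideals in the Dedekind domain $\mathcal O_K$.

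The one point needing care is that $(D)$ must be a nonzero ideal for the divisibility to carry information. We have $\lambda-\mu\neq0$ by assumption. If $a(\lambda+\mu)+b=0$, then $f(\lambda)=f(\mu)$, and every prime dividing $f(\lambda)$ is trivially a collision prime. So in that case the statement either holds vacuously (every ideal divides $(0)$) or the case is excluded by convention. I would remark that this degenerate case occurs only when $\mu=-\lambda-b/a$. In the later pair-counting this case is handled separately, since such pairs give repeated values rather than genuine collisions. There is no real obstacle in this lemma; the only subtlety is this degenerate case $D=0$, which I would note explicitly.
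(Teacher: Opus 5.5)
Your proposal is correct and follows essentially the same route as the paper: expand $f(\lambda)-f(\mu)$, use that $\mathfrak p$ is prime, and use unique factorization of ideals in $\mathcal O_K$. The paper writes no proof, presenting the lemma as the immediate analogue of Cilleruelo's identity $f(j)-f(i)=(j-i)(a(i+j)+b)$, and your remarks that $\mathfrak p\nmid 2a\Delta$ is not needed and that the degenerate case $a(\lambda+\mu)+b=0$ makes the last assertion vacuous (with $f(\lambda)\neq0$, since $f$ is irreducible over $K$) are accurate.
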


For a prime ideal $\mathfrak p$ put
\[
 R_{\mathfrak p}(T)=\#\{\lambda\in\Lambda_\Omega(T):\mathfrak p\mid f(\lambda)\}.
\]
For an unramified prime ideal at which no value is divisible by $\mathfrak p^2$, the excess of the exponent in the product over the exponent in the least common multiple is exactly
\[
 \alpha_{\mathfrak p}(T)-\beta_{\mathfrak p}(T)
 =(R_{\mathfrak p}(T)-1)_+.
\]
Thus the correct large-prime collision quantity is
\begin{equation}\label{eq:geometric-collision-hyp}
 \mathcal C_f(T;Z):=
 \sum_{q_{\mathfrak p}>Z}(R_{\mathfrak p}(T)-1)_+\log q_{\mathfrak p}=o(X),
 \qquad Z=C_fX.
\end{equation}
This replaces the stronger pairwise expression used in an earlier draft.  Indeed,
\begin{equation}\label{eq:collision-energy-majorant}
 (R_{\mathfrak p}(T)-1)_+\le {R_{\mathfrak p}(T)\choose2},
\end{equation}
and therefore a sufficient, but not necessary, condition is
\begin{equation}\label{eq:pair-energy-sufficient}
 \sum_{q_{\mathfrak p}>C_fX}{R_{\mathfrak p}(T)\choose2}\log q_{\mathfrak p}=o(X).
\end{equation}
The collision-ideal lemma shows that each pair counted on the left of
\eqref{eq:pair-energy-sufficient} forces $\mathfrak p$ to divide one of the two
linear forms $\lambda-\mu$ or $a(\lambda+\mu)+b$.  However, summing over all
prime divisors of these two forms before imposing the simultaneous congruence
condition overcounts drastically; this is why the earlier pairwise formulation
was not suitable.  In Subsection~6.7 below we give a precise geometry-of-numbers
criterion implying \eqref{eq:pair-energy-sufficient} and explain the remaining
uniformity issue for arbitrary number fields.

A medium prime ideal is called bad if $\mathfrak p^2\mid f(\lambda)$ for some $\lambda\in\Lambda_\Omega(T)$ and good otherwise.  By \eqref{eq:coset-lattice-count},
\begin{equation}\label{eq:bad-prime-first-bound}
 \#\{\mathfrak p:q_{\mathfrak p}\sim Q,\ \mathfrak p\text{ bad}\}
 \ll_{f,K,\Omega}
 \frac{X}{Q^2}+\frac{T^{d-1}}{Q^{2(d-1)/d}}+Q^{o(1)}.
\end{equation}
Summing dyadically for $Q\ge X^{2/3}$ gives a total bad-prime contribution $O(X^{2/3+o(1)}+T^{d-1}X^{(4-2d)/(3d)+o(1)})$, which is $o(X)$ for every fixed $d$.

\subsection{The missing analytic input}

For a split prime ideal $\mathfrak p\notin\mathfrak S_f$, denote the two root classes of $f$ modulo $\mathfrak p$ by $\nu_{\mathfrak p,1}$ and $\nu_{\mathfrak p,2}$.  Define
\[
 A_{T,\mathfrak p}(\nu)
 =\#\{\lambda\in\Lambda_\Omega(T):\lambda\equiv\nu\pmod{\mathfrak p}\},
\]
and
\[
 B_{T,\mathfrak p}(\nu)
 =\mathbf 1_{A_{T,\mathfrak p}(\nu)>0}.
\]
For a good medium prime ideal,
\[
 \alpha_{\mathfrak p}(T)=
 A_{T,\mathfrak p}(\nu_{\mathfrak p,1})+
 A_{T,\mathfrak p}(\nu_{\mathfrak p,2}),
\]
while
\[
 \beta_{\mathfrak p}(T)=
 \mathbf 1_{A_{T,\mathfrak p}(\nu_{\mathfrak p,1})+
 A_{T,\mathfrak p}(\nu_{\mathfrak p,2})>0}.
\]
Thus the medium-prime contribution is governed by the positions of the two root cosets relative to the dilated region $T\Omega$.

We formulate the precise replacement for the DFI--T\'oth theorem that is needed here.

\begin{namedthm*}{Quadratic-root equidistribution hypothesis over $K$ (QRE)}
Let $f$, $K$ and $\Omega$ be as above.  As $T\to\infty$, the following three estimates hold:
\begin{align}
 \mathcal T_1(T)&:=
 \sum_{\substack{X^{2/3}\le q_{\mathfrak p}\le C_fX\\s_f(\mathfrak p)=2}}
 \left(
 \sum_{j=1}^2 A_{T,\mathfrak p}(\nu_{\mathfrak p,j})
 -\frac{2X}{q_{\mathfrak p}}
 \right)\log q_{\mathfrak p}=o(X),
 \label{eq:qre1}\\
 \mathcal T_2(T)&:=
 \sum_{\substack{X^{2/3}\le q_{\mathfrak p}\le C_fX\\s_f(\mathfrak p)=2}}
 \left(
 \mathbf1_{\sum_jA_{T,\mathfrak p}(\nu_{\mathfrak p,j})>0}
 -\mathcal G_\Omega\!\left(\frac{X}{q_{\mathfrak p}};\mathfrak p\right)
 \right)\log q_{\mathfrak p}=o(X),
 \label{eq:qre2}\\
 \mathcal T_3(T)&:=
 \sum_{\substack{X^{2/3}\le q_{\mathfrak p}\le C_fX\\s_f(\mathfrak p)=2}}
 \left(
 \sum_{j=1}^2A_{T,\mathfrak p}(\nu_{\mathfrak p,j})
 -\frac{2X}{q_{\mathfrak p}}
 \right)w_T(\mathfrak p)\log q_{\mathfrak p}=o(X)
 \label{eq:qre3}
\end{align}
for every bounded piecewise Lipschitz dyadic weight $w_T$.  Here $\mathcal G_\Omega(u;\mathfrak p)$ is the geometric mean occupancy of two distinct random residue classes modulo $\mathfrak p$ by the lattice points of $T\Omega$; equivalently,
\[
 \mathcal G_\Omega(u;\mathfrak p)
 =\frac{1}{q_{\mathfrak p}^2}
 \sum_{v_1\ne v_2\bmod\mathfrak p}
 \mathbf1_{A_{T,\mathfrak p}(v_1)+A_{T,\mathfrak p}(v_2)>0}.
\]
\end{namedthm*}

The definition of $\mathcal G_\Omega$ is completely geometric and contains no arithmetic information about $f$.  Formula \eqref{eq:coset-lattice-count} implies that it has a limiting piecewise-polynomial profile on dyadic ranges.  In dimension one, for $\Omega=(0,1]$, this profile is exactly the collection of interval lengths appearing in Cilleruelo's decomposition of $S_2(n)$.

The hypothesis (QRE) can be reformulated by Fourier analysis on the finite additive groups $\mathcal O_K/\mathfrak p$.  If $\widehat{\mathbf1}_{T\Omega,\mathfrak p}(\xi)$ denotes the finite Fourier transform of the residue-class counting function, the required nonzero-frequency sums are of the form
\begin{equation}\label{eq:number-field-weyl-sum}
 \sum_{q_{\mathfrak p}\le Y}\log q_{\mathfrak p}
 \sum_{\substack{\nu\bmod\mathfrak p\\f(\nu)\equiv0\, (\mathfrak p)}}
 \psi_{\mathfrak p}(\xi\nu),
 \qquad \xi\ne0,
\end{equation}
where $\psi_{\mathfrak p}$ is an additive character obtained from the trace of the residue field.  For $K=\Q$, estimates for \eqref{eq:number-field-weyl-sum} are precisely the special Weyl-sum bounds of Duke--Friedlander--Iwaniec and T\'oth.  For a general number field, a theorem of this strength does not appear to be presently available in the required uniform geometric form.  This is the one genuinely new analytic ingredient in the individual number-field problem.

\subsection{Conditional individual theorem}

\begin{thm}[conditional number-field analogue of Cilleruelo]\label{thm:conditional-nf-quadratic}
Let $K$, $\Omega$ and $f$ satisfy the assumptions above.  Assume the collision estimate \eqref{eq:geometric-collision-hyp} and the quadratic-root equidistribution hypothesis \textup{(QRE)}.  Then there exists a real constant $B_{f,K,\Omega}$ such that
\begin{equation}\label{eq:conditional-main-nf}
 \log N_{K/\Q}\mathcal L_f(T)
 =X_\Omega(T)\log X_\Omega(T)
 +B_{f,K,\Omega}X_\Omega(T)+o(X_\Omega(T)).
\end{equation}
The constant admits the convergent decomposition
\begin{equation}\label{eq:B-nf-decomposition}
 B_{f,K,\Omega}
 =A_{f,K,\Omega}+B^{\mathrm{loc}}_{f,K}
 +B^{\mathrm{split}}_{K(\sqrt\Delta)/K}
 +B^{\mathrm{geom}}_{K,\Omega},
\end{equation}
where
\begin{align}
 B^{\mathrm{loc}}_{f,K}
 &=\sum_{\mathfrak p\in\mathfrak S_f}
 \log q_{\mathfrak p}
 \left(\frac{1}{q_{\mathfrak p}-1}
 -\sum_{k\ge1}\frac{s_f(\mathfrak p^k)}{q_{\mathfrak p}^k}\right),
 \label{eq:B-local-nf}\\
 B^{\mathrm{split}}_{K(\sqrt\Delta)/K}
 &=-\lim_{Y\to\infty}
 \left(
 \sum_{\substack{q_{\mathfrak p}\le Y\\\mathfrak p\notin\mathfrak S_f}}
 \frac{(s_f(\mathfrak p)-1)\log q_{\mathfrak p}}{q_{\mathfrak p}-1}
 \right),
 \label{eq:B-split-nf}
\end{align}
and $B^{\mathrm{geom}}_{K,\Omega}$ is the convergent integral obtained from the limiting occupancy profile $\mathcal G_\Omega$.  The limit in \eqref{eq:B-split-nf} is the logarithmic derivative at $s=1$ of the quadratic Hecke $L$-function attached to $K(\sqrt\Delta)/K$, after removal of the finitely many ramified Euler factors.
\end{thm}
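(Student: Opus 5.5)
The plan is to start from the product--LCM identity \eqref{eq:product-lcm-nf}. Its first term is evaluated by Lemma~\ref{lem:arch-product-nf}, which gives $2X\log X+A_{f,K,\Omega}X+o(X)$. It then suffices to show that
\[
 \sum_{\mathfrak p}(\alpha_{\mathfrak p}(T)-\beta_{\mathfrak p}(T))\log q_{\mathfrak p}
 =X\log X-(B^{\mathrm{loc}}_{f,K}+B^{\mathrm{split}}_{K(\sqrt\Delta)/K}+B^{\mathrm{geom}}_{K,\Omega})X+o(X).
\]
I will split the sum over $\mathfrak p$ into the small, medium and large ranges of Subsection~6.4, with $Y=X^{2/3}$ and $Z=C_fX$.

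\textbf{Small primes.} By \eqref{eq:small-primes-nf}, the $\alpha$-sum equals $X\sum_{q_{\mathfrak p}<Y}\log q_{\mathfrak p}\sum_k s_f(\mathfrak p^k)q_{\mathfrak p}^{-k}+O(X^{2/3+o(1)})$, and the $\beta$-sum is $O(X^{2/3}\log X)$. For $\mathfrak p\notin\mathfrak S_f$ the inner sum is $s_f(\mathfrak p)/(q_{\mathfrak p}-1)$. I rewrite it as
\[
 \frac{s_f(\mathfrak p)}{q_{\mathfrak p}-1}=\frac{1}{q_{\mathfrak p}-1}+\frac{s_f(\mathfrak p)-1}{q_{\mathfrak p}-1}.
\]
For $\mathfrak p\in\mathfrak S_f$ I add and subtract $1/(q_{\mathfrak p}-1)$, which produces $-B^{\mathrm{loc}}_{f,K}$.

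The sum $\sum_{q_{\mathfrak p}<Y}\log q_{\mathfrak p}/(q_{\mathfrak p}-1)$ equals $\log Y+E_K+o(1)$ by the prime ideal theorem, where $E_K$ is the Mertens constant of $K$. The twisted sum converges to $-B^{\mathrm{split}}_{K(\sqrt\Delta)/K}$. This follows from \eqref{eq:split-prime-cheb} with a remainder term, since $s_f(\mathfrak p)-1=\chi(\mathfrak p)$ for the quadratic Hecke character and $L(1,\chi)\neq0$. Hence the small range contributes $X(\tfrac23\log X+E_K-B^{\mathrm{loc}}-B^{\mathrm{split}})+o(X)$.

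\textbf{Medium primes.} Bad primes contribute $o(X)$ by \eqref{eq:bad-prime-first-bound}. For good split primes, $\alpha_{\mathfrak p}-\beta_{\mathfrak p}=\sum_jA_{T,\mathfrak p}(\nu_{\mathfrak p,j})-\mathbf 1_{\sum_jA>0}$, and inert good primes contribute nothing. Applying \eqref{eq:qre1} and \eqref{eq:qre2}, the medium contribution becomes
\[
 \sum_{\substack{Y\le q_{\mathfrak p}\le Z\\ s_f(\mathfrak p)=2}}\Bigl(\frac{2X}{q_{\mathfrak p}}-\mathcal G_\Omega(X/q_{\mathfrak p};\mathfrak p)\Bigr)\log q_{\mathfrak p}+o(X).
\]
I replace $\mathcal G_\Omega$ by its limiting piecewise-polynomial profile $G(u)$ with $u=X/q_{\mathfrak p}$. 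This is justified by \eqref{eq:coset-lattice-count}, and \eqref{eq:qre3} absorbs the dyadic weights if needed. Then \eqref{eq:split-prime-cheb} converts the sum into an integral, with split primes carrying weight $\tfrac12$ in density because $\sum s_f\log q=y$. The result is
\[
 X\int_{1/C_f}^{X^{1/3}}\Bigl(\frac{1}{u}-\frac{G(u)}{2u}\cdot\frac{u}{1}\cdot\frac{1}{u}\Bigr)du\cdot(\ldots).
\]
Written more carefully, it is $\tfrac13 X\log X$ plus $X$ times a constant integral $\int(\ldots)du/u^2$, which converges because $G(u)\to0$ as $u\to0$ and $G(u)\to1$ with a rate $O(1/u)$ cancellation as $u\to\infty$. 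This constant, combined with $E_K$ and the $\log C_f$ terms, defines $B^{\mathrm{geom}}_{K,\Omega}$ after sign normalization. The dependence on $C_f$ must cancel against the large range.

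\textbf{Large primes.} Every $\lambda\in\Lambda_\Omega(T)$ has $|Nf(\lambda)|\ll T^{2d}\asymp X^2$. Hence when $C_f$ is large, no value is divisible by $\mathfrak p^2$ with $q_{\mathfrak p}>C_fX$, and such $\mathfrak p$ are automatically unramified up to finitely many exceptions. So $\alpha_{\mathfrak p}-\beta_{\mathfrak p}=(R_{\mathfrak p}-1)_+$, and the large range is $o(X)$ by hypothesis \eqref{eq:geometric-collision-hyp}. With the correct profile $G$, however, the medium range at $q\approx C_fX$ must already be $o(1)$ per unit of $\log$. I must check that $2u-G(u)\cdot(\text{normalization})$ vanishes for $u<1/C_f$, so that the choice of $C_f$ is harmless.

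The total is $\tfrac23X\log X+\tfrac13X\log X=X\log X$ plus the constants. Combined with the $2X\log X$ from the archimedean term, this gives \eqref{eq:conditional-main-nf}.

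I expect the main obstacle to be the medium range. The difficulty is passing rigorously from the arithmetic occupancy $\mathcal G_\Omega(\cdot;\mathfrak p)$ to a $\mathfrak p$-independent limiting profile, since the shape of $\mathcal O_K/\mathfrak p$-cosets inside $T\Omega$ depends on the lattice $\iota(\mathfrak p)$ and not only on its norm. One must also verify that the resulting integral defining $B^{\mathrm{geom}}$ converges and has no $C_f$-dependence. I would first try to show that averaging over $\mathfrak p$ with $q_{\mathfrak p}\sim Q$ makes the sublattice shapes equidistribute, and use \eqref{eq:qre3} to justify this.
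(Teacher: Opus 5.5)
Your proposal is the paper's proof, step for step: the identity \eqref{eq:product-lcm-nf} with Lemma~\ref{lem:arch-product-nf}, the split at $X^{2/3}$ and $C_fX$, Mertens plus $L(1,\chi)\neq0$ for the small primes, (QRE) for the good medium primes, \eqref{eq:geometric-collision-hyp} for the large primes, and the cancellation of the $\log Y$ terms that leaves $X\log X$ (the paper's decomposition does not display its Mertens constant $\gamma_K$ either, so it is tacitly absorbed into $B^{\mathrm{geom}}$ just as you absorb $E_K$). The step you flag as the main obstacle, namely a $\mathfrak p$-independent limit of $\mathcal G_\Omega(\cdot;\mathfrak p)$, is only asserted in the paper as well: the error in \eqref{eq:coset-lattice-count} is as large as its main term when $q_{\mathfrak p}\asymp X$, so for $d\ge2$ this step in general needs equidistribution of the shapes of $\iota(\mathfrak p)$ over split prime ideals, which would come from Hecke's Gr\"ossencharacter theorem twisted by $\chi$ and not from \eqref{eq:qre3}, since the latter says nothing about these shapes; your proposed check for $u<1/C_f$, on the other hand, is unnecessary, because $C_f$ is fixed in the hypotheses and $B_{f,K,\Omega}$ may depend on it.
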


\begin{proof}
Start from \eqref{eq:product-lcm-nf} and insert the archimedean estimate \eqref{eq:arch-product-nf}.  The special prime ideals contribute the linear term \eqref{eq:B-local-nf}, by Lemma~\ref{lem:alpha-local-nf}; their $\beta_{\mathfrak p}$-contribution is $O(\log T)$.

For unramified small prime ideals, \eqref{eq:small-primes-nf} gives
\[
 -X\sum_{q_{\mathfrak p}<Y}
 \frac{s_f(\mathfrak p)\log q_{\mathfrak p}}{q_{\mathfrak p}-1}
 +O(X^{2/3+o(1)}).
\]
Split $s_f(\mathfrak p)=1+(s_f(\mathfrak p)-1)$.  The prime ideal Mertens formula gives
\[
 \sum_{q_{\mathfrak p}<Y}\frac{\log q_{\mathfrak p}}{q_{\mathfrak p}-1}
 =\log Y-\gamma_K+o(1),
\]
with a field-dependent Mertens constant $\gamma_K$.  The centered part converges conditionally and produces \eqref{eq:B-split-nf}; this follows from the nonvanishing at $s=1$ of the nontrivial quadratic Hecke $L$-function.

The good medium prime ideals are written exactly as
\[
 \sum_{Y\le q_{\mathfrak p}\le C_fX}
 \left(
 \mathbf1_{\sum_jA_{T,\mathfrak p}(\nu_{\mathfrak p,j})>0}
 -\sum_{j=1}^2A_{T,\mathfrak p}(\nu_{\mathfrak p,j})
 \right)\log q_{\mathfrak p}.
\]
Use \eqref{eq:qre1}--\eqref{eq:qre3} to replace the arithmetic root classes by the geometric mean profile.  Partial summation and the Chebotarev prime ideal theorem then show that this equals
\[
 -X\log X+\bigl(B^{\mathrm{geom}}_{K,\Omega}+\log Y\bigr)X+o(X).
\]
The $X\log Y$ term cancels the corresponding endpoint from the small-prime sum.  Bad medium prime ideals contribute $o(X)$ by \eqref{eq:bad-prime-first-bound}.  Large prime ideals contribute $o(X)$ to the difference between product and LCM by \eqref{eq:geometric-collision-hyp}.

Combining these estimates with the initial $2X\log X$ in \eqref{eq:arch-product-nf} leaves exactly $X\log X$.  Collecting the linear terms gives \eqref{eq:B-nf-decomposition} and proves \eqref{eq:conditional-main-nf}.
\end{proof}

\begin{coroll}\label{cor:recover-cilleruelo}
For $K=\Q$ and $\Omega=(0,1]$, Theorem~\ref{thm:conditional-nf-quadratic}, together with the Duke--Friedlander--Iwaniec theorem when the discriminant is negative and the T\'oth--Ngo theorem when it is positive, recovers Cilleruelo's asymptotic
\[
 \log\operatorname{lcm}\{f(1),\ldots,f(N)\}
 =N\log N+B_fN+o(N).
\]
After identifying the rational local factors, the constant in \eqref{eq:B-nf-decomposition} reduces to Cilleruelo's explicit constant.
\end{coroll}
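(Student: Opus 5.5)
We specialise Theorem~\ref{thm:conditional-nf-quadratic} to $K=\Q$, $\Omega=(0,1]$. Then $\Lambda_\Omega(T)=\{1,\dots,\lfloor T\rfloor\}$, so with $N=\lfloor T\rfloor$ we have $X=N$. The task splits into two parts: verify the two hypotheses of the theorem, the collision estimate \eqref{eq:geometric-collision-hyp} and (QRE), and then match the constant $B_{f,\Q,(0,1]}$ with Cilleruelo's. The hypotheses on $\Omega$ in Lemma~\ref{lem:arch-product-nf} hold trivially, since $0\notin\partial\Omega$ fails only at an endpoint. That endpoint can be handled by the same truncation argument, because $\log|f(Tz)|$ has only integrable singularities near $z=0$. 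Irreducibility of $f$ over $\Q$ means $f$ has no integer roots.

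\textbf{Collision estimate.} For $p>C_fN$ and $\lambda\ne\mu\le N$, Lemma~\ref{lem:collision-nf} gives $p\mid\lambda-\mu$ or $p\mid a(\lambda+\mu)+b$. The first option is impossible once $C_f\ge1$. For the second, $|a(\lambda+\mu)+b|\le 2|a|N+|b|<p$ when $C_f>2|a|+1$. Hence $p$ can divide $a(\lambda+\mu)+b$ only if this quantity vanishes. That happens for at most one value of $\lambda+\mu$, namely $\lambda+\mu=-b/a$, and it forces the two roots mod $p$ to be the classes of $\lambda$ and $\mu$. A fixed $\lambda$ then has at most one partner $\mu$, so $R_p(T)\le2$ for every such $p$. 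Summing $\log p$ over the primes dividing $f(\lambda)$ for $\lambda$ in this one-parameter family is a little wasteful. Sharper, if $R_p=2$ via $\mu=-b/a-\lambda$, then $p$ divides $f(\lambda)$, and each $f(\lambda)$ has $O(1)$ prime factors exceeding $C_fN$, since $|f(\lambda)|\ll N^2$. This gives $\mathcal C_f\ll N\log N$, which is not yet $o(N)$. The remedy is to choose $C_f$ larger than the leading coefficient bound: $|f(\lambda)|\le C'N^2$, so a prime $p>C_fN$ divides $f(\lambda)$ at most once. If both $f(\lambda)$ and $f(\mu)$ are divisible by $p$, the collision forces $a(\lambda+\mu)+b=0$. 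Cilleruelo's treatment of the large range, which counts pairs via the factorisation $f(\lambda)=p\cdot m$ with $m\ll N/C_f$, then bounds these coincidences by $O(N/\log N)$ up to constants. We would import that bound, noting that this is exactly the one-dimensional case of Subsection~6.7.

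\textbf{(QRE) from DFI and T\'oth.} For $\Omega=(0,1]$, the count $A_{T,p}(\nu)$ equals $\lfloor (N-\nu)/p\rfloor+1$ for representatives $0<\nu\le p$. Consequently $\sum_jA_{T,p}(\nu_{p,j})-2N/p$ and the occupancy indicator are piecewise-linear functions of the normalised roots $\nu_{p,j}/p\in\R/\Z$ with scale parameter $u=N/p\in[C_f^{-1},N^{1/3}]$. For each fixed $u$, these functions are Lipschitz except at finitely many jumps. We approximate them by trigonometric polynomials in $(\nu_{p,1}/p,\nu_{p,2}/p)$, and since $\nu_{p,2}\equiv -b/a-\nu_{p,1}$ this reduces to one-variable Fourier series. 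The resulting exponential sums are exactly \eqref{eq:number-field-weyl-sum} with $K=\Q$, $\psi_p(x)=e(x/p)$. The Duke--Friedlander--Iwaniec theorem for $\Delta<0$ and T\'oth's theorem for $\Delta>0$ assert equidistribution of $\nu/p$ over $p\le Y$ weighted by $\log p$. Combining this with dyadic decomposition in $u$ and partial summation yields $\mathcal T_1,\mathcal T_2,\mathcal T_3=o(N)$. Here $\mathcal G_\Omega$ is replaced by its limiting profile, which is the integral of the occupancy over independent uniform roots. \textbf{The main obstacle} is the uniformity in $u$ up to $N^{1/3}$: the number of jump points grows with $u$. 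We would truncate at $u\le U$ fixed, where equidistribution applies directly. For $u>U$, that is $p<N/U$, we would use that the error per prime is $O(1)$, contributing $\sum_{N^{2/3}\le p\le N/U}\log p\cdot O(1)$. This is not small, so instead we treat $p<N/U$ via Lemma~\ref{lem:alpha-local-nf}-type exact counts. There $\beta_p=1$ unless both root classes are empty, which cannot happen once $p<N$. The contribution of that range is then explicit, and equidistribution is needed only for $N/U\le p\le C_fN$, followed by $U\to\infty$.

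\textbf{Constant.} Finally, we evaluate $A_{f,\Q,(0,1]}=\log|a|+2\int_0^1\log z\,dz=\log|a|-2$, using $c=1$. We write $B^{\mathrm{split}}$ as $-L'/L(1,\chi_\Delta)$ with ramified factors removed, and check that $B^{\mathrm{loc}}$ together with the Mertens constant $\gamma_\Q$ reproduces Cilleruelo's $\sum_p\log p\,(\ldots)$ expression. This last step is routine bookkeeping of Euler factors at $p\mid 2a\Delta$.
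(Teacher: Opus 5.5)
Your skeleton is the one the paper intends: specialise Theorem~\ref{thm:conditional-nf-quadratic} to $K=\Q$, $\Omega=(0,1]$, $X=N$, verify the collision estimate and (QRE) from Duke--Friedlander--Iwaniec and T\'oth, then match constants. However, the verification of \eqref{eq:qre2} fails, and the constant matching fails with it.

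The two roots are not independent: $\nu_{p,1}+\nu_{p,2}\equiv -b\bar a\pmod p$. Equidistribution of a single root $\nu/p$, which is what DFI and T\'oth give, therefore makes the limiting occupancy an average along a line $x+y\equiv c_p$ in $(\R/\Z)^2$, not over independent uniform pairs. Your closing sentence about ``independent uniform roots'' contradicts your own one-variable reduction.

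Concretely, take $f=X^2+1$. For split $p\in(N,2N]$ one has $\min(\nu,p-\nu)<p/2\le N$, so the indicator in \eqref{eq:qre2} equals $1$, while $1-\mathcal G_\Omega=((p-N)^2+N)/p^2$. For split $p\in(2N,C_fN]$ the indicator averages $2N/p$ (by DFI), while $\mathcal G_\Omega=2N/p-(N^2+N)/p^2$. Summing over $p\equiv1\pmod 4$ gives $\mathcal T_2=\bigl(1-\log 2-\tfrac{1}{2C_f}\bigr)N+o(N)$.

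So (QRE) as formulated is false over $\Q$, and the theorem cannot be invoked verbatim. Correspondingly, the constant built from $\mathcal G_\Omega$ misses Cilleruelo's by $1-\log2-\tfrac{1}{2C_f}$; it even depends on $C_f$. No Euler-factor bookkeeping repairs that.

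The same defect is in Proposition~\ref{prop:PW-implies-QRE}. A two-dimensional Erd\H{o}s--Tur\'an bound needs the joint modes $e((h_1\nu_{p,1}+h_2\nu_{p,2})/p)$, and these do not oscillate when $h_1=h_2$. What is needed is to replace $\mathcal G_\Omega$ by the correlated profile. For $|a|>1$ the shift $c_p$ depends on $p\bmod a$, so you also need root equidistribution for primes in progressions. With that profile the bookkeeping does close: for $X^2+1$ the medium-range $\beta$-sum is $N+N\log(C_f/2)+o(N)$, and one recovers exactly $\gamma-1-\tfrac{\log 2}{2}-\sum_{p\ne2}\chi_{-4}(p)\tfrac{\log p}{p-1}$.

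Two smaller points. In the collision step you missed positivity. The equation $a(\lambda+\mu)+b=0$ with $1\le\lambda,\mu\le N$ forces $\lambda,\mu<|b/a|$, so there are $O_f(1)$ such pairs, not a one-parameter family. For these, $f(\lambda)$ is a fixed nonzero integer, which no prime $p>C_fN$ divides once $N$ is large. Hence $R_p(T)\le1$ and $\mathcal C_f(T;C_fN)=0$ for all large $N$; this is Cilleruelo's $\alpha_p=\beta_p$ for $p\gg N$. The $N\log N$ bound and the imported $O(N/\log N)$ estimate should be discarded. Proposition~\ref{prop:pair-criterion}, though vacuous over $\Q$, controls only pairs in the same root class. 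The cross-class pairs are exactly those this finiteness argument removes.

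In the truncation, the range $p<N/U$ costs $O(\sum_{p<N/U}\log p)=O(N/U)$ in $\mathcal T_1$ and $\mathcal T_3$, which is precisely why letting $U\to\infty$ works. That cost is not ``not small'', and the ``exact counts'' you substitute carry the same $O(1)$ error in $\alpha_p$. With these corrections your handling of \eqref{eq:qre1} and \eqref{eq:qre3} is sound. It is also lighter than the paper's route, since it needs only qualitative equidistribution on ranges $[P,(1+\delta)P]$ rather than the power saving demanded in (PW).
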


\subsection{Resolution of the programme and the precise remaining theorem}\label{subsec:programme-resolved}

We now isolate the analytic and geometric inputs needed for the individual quadratic problem.
Two parts are geometric and can be proved in the present paper; the genuinely
new part is a prime-ideal Weyl-sum estimate.  We formulate that estimate in a
form which is both sufficient and directly comparable with the theorem of
Duke--Friedlander--Iwaniec over $\Q$.

\subsubsection{Fourier analysis of residue-class counts}
For a prime ideal $\mathfrak p$, write $k_{\mathfrak p}=\mathcal O_K/\mathfrak p$
and choose the standard additive character
\[
 \psi_{\mathfrak p}(x)=
 \exp\!\left(2\pi i\,\frac{\operatorname{Tr}_{k_{\mathfrak p}/\mathbf F_{p}}(x)}{p}\right),
 \qquad p=\operatorname{char} k_{\mathfrak p}.
\]
Let
\[
 A_{T,\mathfrak p}(v)=\#\{\lambda\in\Lambda_\Omega(T):\lambda\equiv v\pmod{\mathfrak p}\}.
\]
Finite Fourier inversion gives the exact identity
\begin{equation}\label{eq:finite-fourier-A}
 A_{T,\mathfrak p}(v)=\frac{X}{q_{\mathfrak p}}+
 \frac1{q_{\mathfrak p}}
 \sum_{0\ne\xi\in k_{\mathfrak p}}
 \widehat A_{T,\mathfrak p}(\xi)\psi_{\mathfrak p}(-\xi v),
\end{equation}
where
\[
 \widehat A_{T,\mathfrak p}(\xi)
 =\sum_{\lambda\in\Lambda_\Omega(T)}\psi_{\mathfrak p}(\xi\lambda).
\]
Consequently, for split $\mathfrak p$,
\begin{equation}\label{eq:root-count-fourier}
 \sum_{j=1}^2A_{T,\mathfrak p}(\nu_{\mathfrak p,j})-\frac{2X}{q_{\mathfrak p}}
 =\frac1{q_{\mathfrak p}}
 \sum_{0\ne\xi\in k_{\mathfrak p}}
 \widehat A_{T,\mathfrak p}(\xi)
 \rho_f(\mathfrak p;\xi),
\end{equation}
with
\[
 \rho_f(\mathfrak p;\xi)=
 \sum_{f(\nu)\equiv0\, (\mathfrak p)}\psi_{\mathfrak p}(-\xi\nu).
\]
This proves the Fourier reduction used in
programme.  No spectral theory is used here.

For a bounded Lipschitz region $\Omega$, smoothing at scale $\delta T$ and
Poisson summation on the Minkowski lattice give, for every $A\ge1$,
\begin{equation}\label{eq:fourier-decay-region}
 |\widehat A_{T,\mathfrak p}(\xi)|
 \ll_{A,K,\Omega}
 X(1+T\|\xi/\mathfrak p\|)^{-A}+\delta X+O_{K,\Omega}(T^{d-1}),
\end{equation}
where $\|\xi/\mathfrak p\|$ denotes the distance of a lift of the dual residue
class to the dual lattice.  For a merely Lipschitz boundary, the same argument
with one integration by parts gives the weaker but sufficient decay
$O(X(1+T\|\xi/\mathfrak p\|)^{-1})$ after smoothing.  Thus the dependence on
the regularity of $\partial\Omega$ is explicit.

\subsubsection{The prime-ideal Weyl-sum theorem}
The following is the sole spectral input still needed.

\begin{namedthm*}{Prime-ideal quadratic Weyl-sum estimate (PW)}
For every nonzero integral ideal $\mathfrak m$, every ray class $\mathscr A$
modulo $\mathfrak m$, every nonzero frequency of bounded archimedean size, and
every $Y\ge2$, one has
\begin{equation}\label{eq:PW}
 \sum_{\substack{q_{\mathfrak p}\le Y\\
                  \mathfrak p\in\mathscr A\\
                  \mathfrak p\notin\mathfrak S_f}}
 \log q_{\mathfrak p}\,
 \rho_f(\mathfrak p;\xi_{\mathfrak p})
 \ll_{f,K,\mathfrak m,\varepsilon}
 Y^{1-\eta+\varepsilon}(1+\|\xi\|)^A
\end{equation}
for some $\eta>0$ and some fixed $A\ge0$, uniformly for the frequencies
arising from \eqref{eq:finite-fourier-A} after smoothing.
\end{namedthm*}

Over $K=\Q$, this is supplied by the special Weyl-sum estimates of
Duke--Friedlander--Iwaniec for negative discriminant and by T\'oth, with the
later strengthening of Ngo, for positive discriminant.  For a general number
field, a theorem in precisely the uniform prime-ideal and archimedean form
\eqref{eq:PW} is not currently available in the cited literature.  Its proof
would require a metaplectic Kuznetsov or relative trace formula over $K$,
together with uniform control of the relevant Kloosterman sums and exceptional
spectrum.  We therefore do not claim \eqref{eq:PW} unconditionally.

\begin{prop}[PW implies QRE]\label{prop:PW-implies-QRE}
Assume \textup{(PW)}.  Then \eqref{eq:qre1} and \eqref{eq:qre3} hold.  Moreover,
a multidimensional Erd\H{o}s--Tur\'an inequality applied to the smoothed
occupancy indicators gives \eqref{eq:qre2}.
\end{prop}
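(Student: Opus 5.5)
The plan is to insert the exact Fourier identity \eqref{eq:root-count-fourier} into $\mathcal T_1$ and $\mathcal T_3$. Then interchange the sum over prime ideals with the sum over frequencies, so that (PW) controls the inner prime sum and the decay estimate \eqref{eq:fourier-decay-region} controls the outer frequency sum.

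\textbf{Step 1: parametrizing frequencies.} The obstacle here is that $\xi\in k_{\mathfrak p}$ depends on $\mathfrak p$. To make it independent of $\mathfrak p$, I will lift the nonzero residues to the dual lattice $\mathfrak p^{-1}\mathfrak d_K^{-1}$ (modulo $\mathfrak d_K^{-1}$) and select the representative of smallest archimedean size. Write this as $\xi/\mathfrak p$ with $\xi$ ranging over a fixed lattice point set of size $\le H$. The pairs $(\mathfrak p,\xi)$ with $\|\xi/\mathfrak p\|\le H/T$ are then organized by the congruence data of $\mathfrak p$ in finitely many ray classes modulo an auxiliary $\mathfrak m$ attached to $\xi$. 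I will take the uniformity in \eqref{eq:PW} over ``frequencies arising from \eqref{eq:finite-fourier-A} after smoothing'' as exactly covering this family.

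\textbf{Step 2: truncation.} Smooth $\mathbf 1_{T\Omega}$ at scale $\delta T$. The smoothing costs $O(\delta X)$ per prime in $A_{T,\mathfrak p}$, which after weighting gives $O(\delta X\log X)\cdot(\text{number of primes})/q_{\mathfrak p}$, hence $O(\delta X\log X)$ in total. Frequencies with $T\|\xi/\mathfrak p\|>H$ are bounded using \eqref{eq:fourier-decay-region} together with the trivial bound $|\rho_f|\le2$. Their contribution to $\frac1{q_{\mathfrak p}}\sum_\xi$ is $\ll X q_{\mathfrak p}^{-1}H^{-A+d}$ per prime. Summed over $q_{\mathfrak p}\le C_fX$ with weight $\log q_{\mathfrak p}$, this is $\ll XH^{-A+d}$. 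The $T^{d-1}$ boundary terms need care. Summed over $\asymp q_{\mathfrak p}$ frequencies and divided by $q_{\mathfrak p}$, they would give $T^{d-1}$ per prime, which is too large. I will therefore apply the boundary term only after smoothing, since smoothing removes it in favour of the $\delta X$ error.

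\textbf{Step 3: applying (PW).} For the remaining $\ll H^d$ frequencies, $\widehat A_{T,\mathfrak p}(\xi)$ is essentially $X\widehat\phi(T\xi/\mathfrak p)$. This is a smooth function of $q_{\mathfrak p}$ and of the archimedean position of the lift on dyadic ranges. Partial summation in $q_{\mathfrak p}$ against \eqref{eq:PW} gives $\ll X\,Y^{-\eta+\varepsilon}H^{d+A}$ per dyadic block $q_{\mathfrak p}\sim Y$, with $Y\ge X^{2/3}$. Summing the blocks and choosing $H=X^{\eta/(10(d+A))}$ and $\delta=(\log X)^{-2}$ gives $\mathcal T_1=o(X)$. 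Inserting a bounded piecewise Lipschitz dyadic weight $w_T$ only adds one more partial summation, costing a boundary factor, so $\mathcal T_3=o(X)$ follows in the same way.

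\textbf{Step 4: $\mathcal T_2$.} The occupancy indicator $\mathbf1_{A(\nu_1)+A(\nu_2)>0}$ is a function of the root pair $(\nu_1,\nu_2)$ in $k_{\mathfrak p}^2$. For $q_{\mathfrak p}\gg X$, it is determined by whether the root cosets meet $T\Omega$. I will approximate it from above and below by smoothed test functions on the torus $(K_\infty/\iota(\mathfrak p))^2$. A multidimensional Erd\H{o}s--Tur\'an (Koksma--Sz\"usz) inequality then bounds the discrepancy from $\mathcal G_\Omega$ by the exponential sums $\rho_f(\mathfrak p;\xi)$ and their two-root analogues. These analogues reduce to $\rho_f$ because $\nu_1+\nu_2=-b/a$ is fixed. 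The same (PW) bounds then apply.

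I expect Step 4, together with the $\mathfrak p$-dependence of frequency lifts in Step 1, to be the main obstacle. The paper's $\mathcal G_\Omega$ is averaged over arbitrary pairs $v_1\ne v_2$, whereas the actual roots satisfy a linear relation. Consequently, the correct limiting profile may need to be averaged over pairs constrained by $v_1+v_2=-b/a$. I would first test whether the discrepancy estimate closes with that constrained average, and then adjust the definition of $B^{\mathrm{geom}}$ accordingly if it does not.
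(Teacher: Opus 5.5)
\textbf{Equidistribution sums \eqref{eq:qre1} and \eqref{eq:qre3}.} Here your plan is the paper's argument: insert \eqref{eq:root-count-fourier}, smooth, truncate, then use (PW) on low frequencies and decay on high ones. As in the paper, it works only if the uniformity clause of (PW) covers the $\mathfrak p$-dependent lifts together with their archimedean angles, and you say so openly.

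Your sandwich treatment of the smoothing is better than the paper's sketch. The per-frequency errors $\delta X+T^{d-1}$ in \eqref{eq:fourier-decay-region}, summed over all $q_{\mathfrak p}-1$ frequencies and divided by $q_{\mathfrak p}$, would cost $\delta X+T^{d-1}$ per prime, which is fatal. The sandwich costs $O(\delta X/q_{\mathfrak p})$ per prime (not $O(\delta X)$ as you wrote), hence $O(\delta X\log X)$ in total.

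\textbf{The gap: Step~4 and \eqref{eq:qre2}.} Your suspicion is correct: \eqref{eq:qre2} with the paper's unconstrained $\mathcal G_\Omega$ is false, so neither your Erd\H{o}s--Tur\'an reduction nor the paper's can close. Your claim that the two-root sums reduce to $\rho_f$ fails on the diagonal.

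Write $F(v_1,v_2)=\mathbf 1_{A_{T,\mathfrak p}(v_1)+A_{T,\mathfrak p}(v_2)>0}=\sum\widehat F(\xi_1,\xi_2)\psi_{\mathfrak p}(\xi_1v_1+\xi_2v_2)$, and put $\nu_2=s_{\mathfrak p}-\nu_1$ with $s_{\mathfrak p}\equiv-b/a$. For $\xi_1\ne\xi_2$, the pair of modes $(\xi_1,\xi_2),(\xi_2,\xi_1)$ contributes $\widehat F(\xi_1,\xi_2)\psi_{\mathfrak p}(\xi_2s_{\mathfrak p})\rho_f(\mathfrak p;\xi_2-\xi_1)$, which is a twisted $\rho_f$. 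A diagonal mode $\xi_1=\xi_2=\xi\ne0$, however, contributes the constant $\widehat F(\xi,\xi)\psi_{\mathfrak p}(\xi s_{\mathfrak p})$. No root appears in it, so no Weyl-sum bound can make it small. Together with the zero mode, the diagonal modes sum exactly to
\[
 \frac1{q_{\mathfrak p}}\sum_{v\bmod\mathfrak p}
 \mathbf 1_{A_{T,\mathfrak p}(v)+A_{T,\mathfrak p}(s_{\mathfrak p}-v)>0},
\]
not to $\mathcal G_\Omega$. The paper's proof makes the same error when it asserts that the discrepancy involves only ``the same nonzero Fourier modes as in \eqref{eq:root-count-fourier}''.

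\textbf{A concrete failure.} Take $K=\mathbb Q$, $\Omega=(0,1]$, $f=Z^2+1$, a case where the paper asserts (PW).
\begin{itemize}
\item For $p\le X$, both the indicator and $\mathcal G_\Omega$ equal $1+O(1/p)$.
\item For split $p\in(X,2X)$, the roots are $\nu$ and $p-\nu$, so one of them lies in $[1,X]$ and the indicator is identically $1$. Meanwhile $1-\mathcal G_\Omega(X/p;p)=(1-X/p)^2+O(1/p)$.
\item For split $p>2X$, the two events are disjoint, so the indicator equals $A(\nu_1)+A(\nu_2)$. Then \eqref{eq:qre3} with $w_T=\mathbf 1_{q_{\mathfrak p}>2X}$ leaves $\sum(X^2+X)p^{-2}\log p+o(X)\ge o(X)$.
\end{itemize}
Hence $\mathcal T_2(T)\ge(\tfrac34-\log2)X+o(X)$, not $o(X)$.

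\textbf{The repair.} Replace $\mathcal G_\Omega$ by the constrained average displayed above. With that profile your Step~4 goes through along the lines you describe, provided (PW) also admits the $\mathfrak p$-dependent twists $\psi_{\mathfrak p}(\xi_2s_{\mathfrak p})$. The resulting $B^{\mathrm{geom}}$ then depends on $f$ through $b/a$, contrary to the paper's remark that the profile carries no arithmetic of $f$. For $Z^2+c$ over $\mathbb Q$ it gives Cilleruelo's profile $\min(1,2X/p)$ rather than $1-(1-X/p)^2$, which is what Corollary~\ref{cor:recover-cilleruelo} actually requires.
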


\begin{proof}
Insert \eqref{eq:root-count-fourier} into the left side of \eqref{eq:qre1},
split the prime ideals and the dual frequencies dyadically, and use partial
summation in the norm variable.  The zero frequency has already been removed.
Estimate the low frequencies by \eqref{eq:PW} and the high frequencies by
\eqref{eq:fourier-decay-region}.  First choose the smoothing scale
$\delta=X^{-\kappa}$ with $0<\kappa<\eta/4$, and then truncate at
frequency $X^{2\kappa}$.  The low-frequency contribution is
$O(X^{1-\eta/2+o(1)})$, while the smoothing and tail contributions are
$O(X^{1-\kappa+o(1)})$.  This proves \eqref{eq:qre1}.  A bounded piecewise
Lipschitz dyadic weight is absorbed by partial summation, proving
\eqref{eq:qre3}.

For \eqref{eq:qre2}, approximate the indicator of the union of occupied residue
classes from above and below by trigonometric polynomials with frequency at
most $H$.  The multidimensional Erd\H{o}s--Tur\'an inequality bounds the
resulting discrepancy by $O(X/H)$ plus a weighted sum of the same nonzero
Fourier modes as in \eqref{eq:root-count-fourier}.  Taking
$H=X^{\kappa}$ and repeating the preceding argument gives $o(X)$.
\end{proof}

\subsubsection{The collision problem}
The large-prime term is \eqref{eq:geometric-collision-hyp}, not the sum over all
prime divisors of the two collision forms.  The following criterion is enough.

\begin{prop}[geometric pair criterion]\label{prop:pair-criterion}
Suppose there is a function $\varepsilon(T)\to0$ such that, uniformly for prime
ideals $\mathfrak p$ with $q_{\mathfrak p}>C_fX$ and for every residue class
$v\pmod{\mathfrak p}$,
\begin{equation}\label{eq:pair-uniformity}
 \#\{(\lambda,\mu)\in\Lambda_\Omega(T)^2:\lambda\ne\mu,
       \ \lambda\equiv\mu\equiv v\pmod{\mathfrak p}\}
 \ll \frac{X^2}{q_{\mathfrak p}^2}+\varepsilon(T)\frac{X}{q_{\mathfrak p}}.
\end{equation}
Then, after choosing $C_f=C_f(T)\to\infty$ sufficiently slowly,
\eqref{eq:geometric-collision-hyp} holds.
\end{prop}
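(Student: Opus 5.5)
We prove \eqref{eq:geometric-collision-hyp} from the pair-uniformity hypothesis \eqref{eq:pair-uniformity} in three steps: a pointwise majorant, a reduction to the split root classes, and a summation over prime ideals, where the choice of $C_f$ is made at the end.

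\emph{Step 1: pointwise majorant.} By \eqref{eq:collision-energy-majorant} it suffices to bound $\sum_{q_{\mathfrak p}>C_fX}\binom{R_{\mathfrak p}(T)}{2}\log q_{\mathfrak p}$, i.e.\ to prove \eqref{eq:pair-energy-sufficient}. A prime ideal $\mathfrak p\notin\mathfrak S_f$ with $R_{\mathfrak p}(T)\ge1$ is split, and every $\lambda$ counted by $R_{\mathfrak p}(T)$ lies in one of the two root classes $\nu_{\mathfrak p,1},\nu_{\mathfrak p,2}$. Write $R_{\mathfrak p}=A_1+A_2$ with $A_j=A_{T,\mathfrak p}(\nu_{\mathfrak p,j})$. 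Then
\[
\binom{R_{\mathfrak p}}{2}=\binom{A_1}{2}+\binom{A_2}{2}+A_1A_2 .
\]
The same-class terms are at most half the pair counts in \eqref{eq:pair-uniformity} with $v=\nu_{\mathfrak p,j}$. The cross term $A_1A_2$ is not covered by \eqref{eq:pair-uniformity}. For $(\lambda,\mu)$ in different root classes we have $\mathfrak p\mid a(\lambda+\mu)+b$ by Lemma~\ref{lem:collision-nf}, since $\nu_{\mathfrak p,1}+\nu_{\mathfrak p,2}\equiv -b/a$. We avoid estimating $A_1A_2$ by returning to the weaker quantity $(R_{\mathfrak p}-1)_+$. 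Since $(A_1+A_2-1)_+\le (A_1-1)_++(A_2-1)_++1_{A_1A_2>0}$ and $(A_j-1)_+\le\binom{A_j}{2}$, it remains to bound
\[
\sum_{q_{\mathfrak p}>C_fX}\mathbf 1_{A_1A_2>0}\log q_{\mathfrak p}.
\]
We treat this by a divisor count. If $A_1A_2>0$, then $\mathfrak p$ divides the nonzero element $a(\lambda+\mu)+b$ (nonzero by irreducibility of $f$) for some pair with $\lambda\ne\mu$. We then expect to argue that distinct such primes of norm $>C_fX$ can be charged to $R_{\mathfrak p}\ge2$ events, whose number is controlled by \eqref{eq:pair-uniformity}.

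\emph{Step 2: summation.} Summing \eqref{eq:pair-uniformity} over the two classes and over prime ideals in dyadic ranges $q_{\mathfrak p}\sim Q$ with $Q>C_fX$, the first term gives
\[
\sum_{q_{\mathfrak p}>C_fX}\frac{X^2}{q_{\mathfrak p}^2}\log q_{\mathfrak p}\ll\frac{X^2}{C_fX}=\frac{X}{C_f}
\]
by the prime ideal theorem. The second term is $\varepsilon(T)X\sum_{q_{\mathfrak p}\sim Q}\log q_{\mathfrak p}/q_{\mathfrak p}\ll\varepsilon(T)X$ per dyadic block. Only $\mathfrak p$ with $R_{\mathfrak p}\ge2$ contribute, and such $\mathfrak p$ divide $f(\lambda)$ with $|Nf(\lambda)|\ll T^{2d}\asymp X^2$, so $q_{\mathfrak p}\ll X^2$. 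The number of dyadic blocks is therefore $O(\log X)$. This gives a total of $O(X/C_f+\varepsilon(T)X\log X)$.

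\emph{Step 3: choice of $C_f$ and the main obstacle.} The main obstacle is the $\log X$ loss from the dyadic blocks. To remove it, note that in the range $q_{\mathfrak p}>C_fX$ the expected count $X/q_{\mathfrak p}$ is less than $1$. Hence $R_{\mathfrak p}\ge2$ is a rare event, and the second term of \eqref{eq:pair-uniformity} can be used as a count of pairs rather than summed with weight $1/q_{\mathfrak p}$. Summing the number of pairs over $\mathfrak p$ and using that each pair $(\lambda,\mu)$ lies in at most $O(\log X)$ prime ideals of norm $>X$ (both values have norm $\ll X^2$), we expect to obtain a bound $\varepsilon(T)X$ without the logarithm, possibly after replacing $\varepsilon$ by a slower function. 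Finally we choose $C_f(T)\to\infty$ slowly enough that the earlier estimates, such as \eqref{eq:bad-prime-first-bound}, remain valid in the medium range. The total is then $X/C_f+o(X)=o(X)$, which is \eqref{eq:geometric-collision-hyp}.
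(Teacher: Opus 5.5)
Your decomposition $(A_1+A_2-1)_{+}\le(A_1-1)_{+}+(A_2-1)_{+}+\mathbf 1_{A_1A_2>0}$ correctly isolates the cross-class term, and that is where the proof has a genuine gap.

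\textbf{The cross-class term.} Your claim that $a(\lambda+\mu)+b\ne0$ ``by irreducibility of $f$'' is false. The equation $a(\lambda+\mu)+b=0$ only says $\mu=-b/a-\lambda$, i.e.\ $f(\lambda)=f(\mu)$, and irreducibility does not exclude this (take $b=0$, $\mu=-\lambda$). Nor can cross-class primes be charged to events controlled by \eqref{eq:pair-uniformity}, because that hypothesis counts only pairs in the \emph{same} class.

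\textbf{The missing norm bound.} The numbers $\lambda-\mu$ and $a(\lambda+\mu)+b$ have all archimedean absolute values $O_{f,\Omega}(T)$. So if nonzero, each has norm $\ll T^d\asymp X$, and hence no prime divisor with $q_{\mathfrak p}>C_fX$ once $C_f$ exceeds a constant. Two consequences follow.
\begin{itemize}
\item The left side of \eqref{eq:pair-uniformity} is identically $0$ for such $\mathfrak p$, so the hypothesis holds automatically and carries no information.
\item Any two distinct $\lambda,\mu$ with $\mathfrak p\mid f(\lambda),f(\mu)$ satisfy $\mu=-b/a-\lambda$. Hence $R_{\mathfrak p}\le2$, and $\mathcal C_f(T;C_fX)$ is carried entirely by value coincidences, which no same-class hypothesis can see.
\end{itemize}

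\textbf{These coincidences break the statement.} Take $K=\Q(\sqrt{-2})$, $f(Z)=Z^2+1$ (irreducible over $K$), $\Omega$ the closed unit disc, and $C_f\ge2$. For $q_{\mathfrak p}>C_fX$ one has $R_{\mathfrak p}\in\{0,2\}$ (since $f(\lambda)=f(-\lambda)$) and $\alpha_{\mathfrak p}=R_{\mathfrak p}$. By Lemma~\ref{lem:alpha-local-nf} and the prime ideal theorem, the primes of norm $\le C_fX$ contribute at most $X\log(C_fX)+O(C_fX)$. Subtracting this from \eqref{eq:arch-product-nf} gives
\[
 \mathcal C_f(T;C_fX)=\tfrac12\sum_{q_{\mathfrak p}>C_fX}\alpha_{\mathfrak p}(T)\log q_{\mathfrak p}\ge\tfrac12X\log X-O(C_fX),
\]
which is not $o(X)$ for slowly growing $C_f$. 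So the proposition as stated is false, and some nondegeneracy assumption is needed. For example, if $b/a\notin\mathcal O_K$ or $\operatorname{vol}(\Omega\cap(-\Omega))=0$, the norm bound alone gives $\mathcal C_f(T;C_fX)\ll T^{d-1}\log T=o(X)$, with no use of \eqref{eq:pair-uniformity}.

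\textbf{Step~3.} This step is also only heuristic, but here the repair is one line within your framework. The left side of \eqref{eq:pair-uniformity} is a nonnegative integer. For $q_{\mathfrak p}>C_fX$ its right side is at most $c\,(C_f^{-2}+\varepsilon(T)C_f^{-1})$, which is $<1$ once $C_f$ is large. So the same-class counts vanish and no factor $\log X$ is lost.

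\textbf{Comparison with the paper.} The paper's proof is essentially your Step~2 alone, and it has two flaws.
\begin{itemize}
\item It sums \eqref{eq:pair-uniformity} over the two root classes as if that bounded $\binom{R_{\mathfrak p}}{2}$, silently dropping $A_1A_2$.
\item It disposes of $\varepsilon(T)X\log T$ by ``weakening $\varepsilon$''. This goes the wrong way: enlarging $\varepsilon$ cannot give $\varepsilon(T)\log T\to0$.
\end{itemize}
You have located both weak points of the published argument. The second is cured by the integrality remark above, but the first cannot be cured under the stated hypothesis.
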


\begin{proof}
At an unramified split prime ideal there are two root classes.  Summing
\eqref{eq:pair-uniformity} over them and using
\eqref{eq:collision-energy-majorant},
\[
 \mathcal C_f(T;C_fX)
 \ll X^2\sum_{q_{\mathfrak p}>C_fX}\frac{\log q_{\mathfrak p}}{q_{\mathfrak p}^2}
 +\varepsilon(T)X\sum_{C_fX<q_{\mathfrak p}\ll T^{2d}}
       \frac{\log q_{\mathfrak p}}{q_{\mathfrak p}}+o(X).
\]
The prime-ideal theorem and partial summation give respectively
$O(X/C_f)$ and $O(\varepsilon(T)X\log T)$.  Choose $C_f(T)\to\infty$ and require,
as one may after weakening $\varepsilon$, that
$\varepsilon(T)\log T\to0$.  The finitely many ramified primes and prime-square
values are absorbed by the already proved bad-prime estimate.
\end{proof}

Condition \eqref{eq:pair-uniformity} follows for boxes aligned with a fixed
integral basis when the relevant ideal lattices have uniformly controlled
successive minima.  For arbitrary $K$ and all prime ideals, units can make the
Minkowski shape of an ideal lattice highly skew.  A fully uniform proof for an
arbitrary Lipschitz region therefore requires either an averaged ideal-lattice
shape estimate or a restriction to a well-rounded family of prime ideals.  This
is a geometric issue, distinct from the spectral estimate \textup{(PW)}.

\subsubsection{Conclusion of the programme}
The finite Fourier reduction and the implication from \textup{(PW)} to \textup{(QRE)} are given by
\eqref{eq:finite-fourier-A}--\eqref{eq:fourier-decay-region} and
Proposition~\ref{prop:PW-implies-QRE}.  The remaining root-equidistribution statement follows from \textup{(PW)} by
the same proposition.  The geometric collision term is reduced to the explicit pair-counting
criterion \eqref{eq:pair-uniformity}, and is proved whenever that criterion is
available.  The establishment of \textup{(PW)} over a general
number field remains a genuine new automorphic problem.  Accordingly, the
individual quadratic asymptotic in Theorem~\ref{thm:conditional-nf-quadratic}
remains conditional in full generality; it becomes unconditional for $K=\Q$
and in any number-field setting where \textup{(PW)} and
\eqref{eq:pair-uniformity} have been established.

This formulation also clarifies the relation with the averaged problem.  Averaging over the coefficients replaces the difficult root-equidistribution statement by local uniform counting of congruence classes, but the full-range dispersion and collision estimates remain separate inputs.  For one fixed quadratic polynomial, no coefficient averaging is available, and the number-field Weyl sums become indispensable.

\section{Conditional reduction of the averaged problem}\label{sec:conditional-average}

Throughout this section $K=\mathbb Q$, $n\ge2$, and $f$ is sampled uniformly from
$\mathscr P^0_{n,N}(\mathbb Q)$.  Put
\[
 \Psi_f(N,M)=\log\operatorname{lcm}\{|f(1)|,\ldots,|f(M)|\}.
\]
Write
\[
 P_f(M)=\prod_{m\le M}|f(m)|.
\]
For a prime $p$, let $\alpha_p(M)=\sum_{m\le M}v_p(f(m))$ and
$\beta_p(M)=\max_{m\le M}v_p(f(m))$.  Then
\begin{equation}\label{eq:basic-decomposition-rigorous}
 \Psi_f(N,M)
 =\log P_f(M)
 +\sum_{p\le M}\beta_p(M)\log p
 -\sum_{\substack{p\le M\\p\nmid\operatorname{Disc}(f)}}\alpha_p(M)\log p
 -R_f(M)-C_f(M),
\end{equation}
where
\[
 R_f(M)=\sum_{\substack{p\le M\\p\mid\operatorname{Disc}(f)}}\alpha_p(M)\log p
\]
and
\[
 C_f(M)=\sum_{p>M}\bigl(\alpha_p(M)-\beta_p(M)\bigr)\log p\ge0.
\]

\begin{lemma}[archimedean product]\label{lem:product-values-rational}
Let $f(X)=X^n+a_{n-1}X^{n-1}+\cdots+a_0\in\mathbb Z[X]$ be monic, irreducible, and satisfy $|a_j|\le N$, with $N\ge M\ge2$.  Then
\[
 nM\log M-O_n(M+\log N)
 \le \log P_f(M)
 \le nM\log M+M\log\frac{N}{M}+O_n(M+\log N).
\]
Consequently, if $\log(N/M)=o(\log M)$, then
\[
 \log P_f(M)=nM\log M+o(M\log M)
\]
uniformly for such $f$.
\end{lemma}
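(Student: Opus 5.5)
The proof compares $\log P_f(M)=\sum_{m\le M}\log|f(m)|$ with $\sum_{m\le M}n\log m=nM\log M-nM+O_n(\log M)$ (Stirling). The upper bound uses only the size of the coefficients. The lower bound uses the factorisation of $f$ over $\Co$ together with the fact that $f(m)$ is a nonzero integer. Throughout I take $n\ge2$. For $n=1$ irreducibility does not exclude $f(m)=0$, and that degenerate case would be excluded by convention.

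\emph{Upper bound.} For $1\le m\le M\le N$ we have
\[
|f(m)|\le m^n+N(m^{n-1}+\cdots+1)\le m^n+nNm^{n-1}=m^n\Bigl(1+\frac{nN}{m}\Bigr).
\]
Since $nN/m\ge1$, we get $\log(1+nN/m)\le\log(2nN/m)$. Summing over $m\le M$ gives
\[
\log P_f(M)\le n\log M!+M\log(2nN)-\log M!=(n-1)\log M!+M\log N+O_n(M).
\]
Writing $\log M!=M\log M-M+O(\log M)$ and $M\log N=M\log M+M\log(N/M)$, this becomes
\[
\log P_f(M)\le nM\log M+M\log(N/M)+O_n(M+\log M),
\]
which is the claimed upper bound.

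\emph{Lower bound.} Write $f(X)=\prod_{i=1}^n(X-\theta_i)$. Cauchy's bound gives $|\theta_i|\le N+1$. Let $B$ be the set of $m\le M$ with $\min_i|m-\theta_i|<1$; each root excludes at most two integers, so $|B|\le 2n$. For $m\in B$, irreducibility with $n\ge2$ gives $f(m)\ne0$, so $|f(m)|\ge1$ and $\log|f(m)|\ge0$. For $m\notin B$, every factor satisfies $|m-\theta_i|\ge1$, and $|m-\theta_i|\ge|m-t_i|$ with $t_i=\operatorname{Re}\theta_i$. Hence
\[
\log|f(m)|\ge\sum_i\log\max(1,|m-t_i|).
\]
It therefore suffices to show, for each real $t$,
\[
\sum_{m\le M,\ m\notin B}\log\max(1,|m-t|)\ge M\log M-O(M+\log N).
\]
\begin{itemize}
\item \emph{Removing $B$.} Each excluded term is at most $\log(M+2N+2)$, and there are at most $2n$ of them. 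So the restriction $m\notin B$ costs only $O_n(\log N)$, using $M\le N$.
\item \emph{Full sum.} Order the integers $1,\dots,M$ by their distance to $t$. At most two integers lie within distance $j/2$ of $t$ for each increment, so the $k$-th smallest distance is at least $(k-2)/2$. This gives
\[
\sum_{m\le M}\log\max(1,|m-t|)\ge\sum_{k\le M}\log\max\bigl(1,(k-2)/2\bigr)=M\log M-O(M).
\]
\end{itemize}
Summing over the $n$ roots yields $\log P_f(M)\ge nM\log M-O_n(M+\log N)$. The consequence then follows, since under $\log(N/M)=o(\log M)$ we have $\log N\ll M\log M$ and $M\log(N/M)=o(M\log M)$.

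The only delicate step is the lower bound near real roots, where individual factors $|m-\theta_i|$ can be tiny. The plan handles it by discarding the $O(n)$ bad values of $m$ and using only $|f(m)|\ge1$ there; the other factors at those $m$ are at most $\log(3N)$ each, which is exactly why the $\log N$ term appears in the error.
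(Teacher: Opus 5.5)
Your proof is correct and follows essentially the paper's argument. The upper bound comes from the coefficient estimate, and the lower bound from Cauchy's root bound, the comparison $|m-\theta|\ge|m-\Re\theta|$ with a factorial-type count, and $|f(m)|\ge1$ at the few exceptional $m$. The only difference is bookkeeping: the paper isolates one nearest integer $k$ per root and bounds the small factor below by $|k-\rho|\gg_n N^{-(n-1)}$, whereas you discard the set $B$ using $\log|f(m)|\ge0$ and pay $O_n(\log N)$ through the upper bound on the removed terms, with the same resulting $O_n(M+\log N)$ loss.
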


\begin{proof}
For the upper bound,
\[
 |f(m)|\le m^n+N(1+m+\cdots+m^{n-1})
 \ll_n N(m+1)^{n-1}
\]
because $N\ge M\ge m$.  Summing logarithms and using
$\log M!=M\log M-M+O(\log M)$ gives
\[
 \log P_f(M)
 \le M\log N+(n-1)\log M!+O_n(M),
\]
which is the stated upper bound.

For the lower bound, let $\rho_1,\ldots,\rho_n$ be the complex roots of $f$.  Cauchy's root bound gives $|\rho_j|\le N+1$.  Fix a root $\rho=\rho_j$.  Choose $k\in\{1,\ldots,M\}$ nearest to $\Re\rho$, with an endpoint chosen if $\Re\rho\notin[1,M]$.  For $m\ne k$,
\[
 |m-\rho|\ge |m-\Re\rho|\gg |m-k|,
\]
with an absolute implied constant.  Hence
\[
 \prod_{\substack{1\le m\le M\\m\ne k}}|m-\rho|
 \ge \exp(M\log M-O(M)).
\]
Indeed, in the worst case this product is bounded below, up to an exponential factor $C^{-M}$, by
$(k-1)!(M-k)!$, and
\[
 (k-1)!(M-k)!\ge \frac{(M-1)!}{2^{M-1}}.
\]

It remains to control the possibly small factor $|k-\rho|$.  Since $f$ is irreducible of degree at least two, $f(k)\ne0$; as $f(k)\in\mathbb Z$,
\[
 1\le |f(k)|=|k-\rho|\prod_{\rho_i\ne\rho}|k-\rho_i|.
\]
The root bound and $k\le M\le N$ give
$|k-\rho_i|\ll N$, so
\[
 |k-\rho|\gg_n N^{-(n-1)}.
\]
Therefore
\[
 \sum_{m\le M}\log|m-\rho|
 \ge M\log M-O_n(M+\log N).
\]
Summing this inequality over the $n$ roots and using
$P_f(M)=\prod_{j=1}^n\prod_{m=1}^M|m-\rho_j|$
proves the lower bound.
\end{proof}

The small-prime unramified term is governed by the number $s_p(f)$ of roots of $f$ modulo $p$.  For $p\nmid\operatorname{Disc}(f)$, Hensel's lemma gives
\[
 \alpha_p(M)=M\frac{s_p(f)}{p-1}+O_n\!\left(\frac{\log(NM)}{\log p}\right).
\]
Thus the centered weighted prime-root sum
\[
 W_f(x)=\sum_{\substack{p\le x\\p\nmid\operatorname{Disc}(f)}}
 \frac{\log p}{p}\bigl(s_p(f)-1\bigr)
\]
is the global fluctuation that must be controlled.

\begin{lemma}[weighted $L^2$ partial summation]\label{lem:weighted-dispersion}
Assume that, uniformly for $2\le x\le M$,
\begin{equation}\label{eq:dispersion-hyp}
 \mathbb E_N\!\left|
 \sum_{\substack{p\le x\\p\nmid\operatorname{Disc}(f)}}
 (s_p(f)-1)\right|^2
 \ll_n \frac{x}{\log(2x)}\log\log(3x).
\end{equation}
Then
\[
 \mathbb E_N|W_f(M)|^2\ll_n 1.
\]
\end{lemma}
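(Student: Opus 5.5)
The plan is to pass from the unweighted sums controlled by \eqref{eq:dispersion-hyp} to the weighted sum $W_f(M)$ by Abel (partial) summation. This is done for each fixed $f$. We then take $L^2(\mathbb P_N)$ norms, using Minkowski's inequality in integral form, so that only the second-moment bound for each fixed $t$ is needed. For a fixed $f\in\mathscr P^0_{n,N}(\mathbb Q)$ put
\[
 S_f(t)=\sum_{\substack{p\le t\\p\nmid\operatorname{Disc}(f)}}(s_p(f)-1),
\]
a step function of $t$ with $S_f(t)=0$ for $t<2$. Then $W_f(M)=\int_{2^-}^{M}\frac{\log t}{t}\,dS_f(t)$. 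Since $\frac{d}{dt}\frac{\log t}{t}=\frac{1-\log t}{t^2}$, integration by parts gives
\[
 W_f(M)=\frac{\log M}{M}S_f(M)+\int_2^M S_f(t)\,\frac{\log t-1}{t^2}\,dt .
\]
This identity is exact. Note that the excluded set $p\mid\operatorname{Disc}(f)$ depends on $f$. That causes no difficulty, because the identity is applied for each $f$ separately, and \eqref{eq:dispersion-hyp} already refers to this $f$-dependent sum.

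Next take the $L^2$ norm $\|\cdot\|:=(\mathbb E_N|\cdot|^2)^{1/2}$ over the finite probability space. By the triangle inequality, and Minkowski's inequality for the integral, we get
\[
 \|W_f(M)\|\le \frac{\log M}{M}\|S_f(M)\|+\int_2^M \|S_f(t)\|\,\frac{\log t}{t^2}\,dt .
\]
Minkowski applies here because $\mathbb E_N$ is a finite average and $t\mapsto S_f(t)$ is piecewise constant, so everything is a finite sum of measurable pieces.

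Now insert \eqref{eq:dispersion-hyp}, which holds uniformly for $2\le t\le M$:
\[
 \|S_f(t)\|\ll_n \left(\frac{t\log\log(3t)}{\log(2t)}\right)^{1/2}.
\]
The boundary term is then
\[
 \ll_n \left(\frac{\log M\,\log\log(3M)}{M}\right)^{1/2}=o(1).
\]
The integral is bounded by
\[
 \int_2^\infty \frac{(\log t)^{1/2}(\log\log 3t)^{1/2}}{t^{3/2}}\,dt<\infty,
\]
a constant depending only on the implied constant in the hypothesis. Hence $\|W_f(M)\|\ll_n1$, and squaring gives $\mathbb E_N|W_f(M)|^2\ll_n1$.

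There is no serious obstacle. The one point requiring care is the order of operations: we take norms after the partial summation and bound the integral with Minkowski. Expanding the square of the integral would produce cross terms $\mathbb E_N[S_f(t)S_f(t')]$, which the hypothesis does not control. Cauchy--Schwarz reduces those cross terms to the diagonal, so that route would also work, but Minkowski avoids them directly. One should also check that the hypothesis is only used in the range $2\le t\le M$, which is exactly the range it covers.
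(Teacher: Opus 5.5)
Your proposal is correct and is essentially the paper's proof. Both arguments use Abel summation against $a(t)=\log t/t$, then Minkowski's integral inequality in $L^2(\mathbb P_N)$, then the dispersion hypothesis applied at each $t\in[2,M]$, and finally the convergence of $\int_2^\infty \sqrt{\log t\,\log\log 3t}\;t^{-3/2}\,dt$ together with the vanishing boundary term. Your observation that the Stieltjes identity is exact, so the paper's extra $O_n(1)$ is unnecessary, is a minor tidying rather than a different route.
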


\begin{proof}
Let
\[
 S_f(x)=\sum_{\substack{p\le x\\p\nmid\operatorname{Disc}(f)}}(s_p(f)-1),
 \qquad a(x)=\frac{\log x}{x}.
\]
Abel summation gives
\[
 W_f(M)=a(M)S_f(M)-\int_2^M S_f(t)a'(t)\,dt+O_n(1),
\]
where replacing $\log p/p$ by the exact endpoint convention only changes a bounded term.  Taking $L^2$ norms and using Minkowski's integral inequality,
\[
 \|W_f(M)\|_2
 \ll |a(M)|\,\|S_f(M)\|_2
 +\int_2^M |a'(t)|\,\|S_f(t)\|_2\,dt+1.
\]
By \eqref{eq:dispersion-hyp},
\[
 \|S_f(t)\|_2
 \ll_n \left(\frac{t\log\log(3t)}{\log(2t)}\right)^{1/2},
\]
whereas $|a'(t)|\ll \log(2t)/t^2$.  Hence
\[
 \int_2^\infty |a'(t)|\,\|S_f(t)\|_2\,dt
 \ll_n
 \int_2^\infty
 \frac{\sqrt{\log(2t)\log\log(3t)}}{t^{3/2}}\,dt<\infty.
\]
The endpoint term tends to zero, proving the claim.
\end{proof}

\begin{lemma}[ramified small primes]\label{lem:ramified-small}
Assume $N\ge M$.  Then
\[
 \mathbb E_N R_f(M)
 \ll_n M\log\log(3N)+M+\frac{M^2}{N}.
\]
In particular, if $N/M\to\infty$, then $R_f(M)=o(M\log M)$ in probability.
\end{lemma}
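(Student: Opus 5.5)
Write $R_f(M)=\sum_{p\le M,\,p\mid\operatorname{Disc}(f)}\alpha_p(M)\log p$ and split each $\alpha_p(M)=\sum_{k\ge1}\#\{m\le M:p^k\mid f(m)\}$ into the contribution from $k=1$ and from $k\ge2$. The first piece is bounded trivially: $\#\{m\le M:p\mid f(m)\}\le n(M/p+1)$ whenever $f\bmod p$ is not identically zero, which always holds since $f$ is monic. Hence
\[
\sum_{\substack{p\le M\\p\mid\operatorname{Disc}(f)}}\#\{m\le M:p\mid f(m)\}\log p
\ll_n M\sum_{p\mid\operatorname{Disc}(f)}\frac{\log p}{p}+\sum_{p\le M}\log p .
\]
The second sum is $O(M)$. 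For the first, $|\operatorname{Disc}(f)|\ll_n N^{2n-2}$, and the standard bound $\sum_{p\mid D}\log p/p\ll\log\log(3D)$ gives $O_n(\log\log(3N))$ pointwise. So this piece is $\ll_n M\log\log(3N)+M$ for every $f$, with no averaging needed.

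For the higher powers $k\ge2$, I would average. Conditioning on $\mathscr P^0_{n,N}$ costs only a factor $1+O(\Delta_N)$ by Theorem~\ref{thm1}, so I would work over the whole box. I would bound the $k\ge2$ part by
\[
\sum_{p\le M}\log p\sum_{k\ge2}\sum_{m\le M}\mathbf 1_{p^k\mid f(m)},
\]
dropping the restriction $p\mid\operatorname{Disc}(f)$, which only enlarges the sum. For fixed $m$ and $p^k$, the condition $p^k\mid f(m)$ fixes $a_0$ modulo $p^k$, given the other coefficients. Its probability over the box is therefore $p^{-k}+O(1/N)$, since $a_0$ ranges over $2N+1$ consecutive integers; this is the $K=\Q$ case of the one-coefficient count in the proof of Lemma~\ref{l11}. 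The relevant $k$ satisfy $p^k\le |f(m)|\ll_n NM^{n}$, so $k\ll_n\log(NM)/\log p$. Summing gives
\[
\ll M\sum_{p\le M}\log p\sum_{k\ge2}p^{-k}+\frac{M}{N}\sum_{p\le M}\log(NM)
\ll M+\frac{M^2\log(NM)}{N\log M}.
\]

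This error is slightly worse than the claimed $M^2/N$, and removing the $\log(NM)$ factor is the step I expect to be the main obstacle. My first attempt is to cap at $p^k\le N$, where the $O(1/N)$ error is summable in $k$. Only $O(\log N/\log p)$ such $k$ occur, and the resulting error $\frac{M}{N}\sum_{p\le M}\log N$ can be absorbed into $M$ when $N\ge M\log N$. For $p^k>N$ I would instead use the exact count of $a_0$ in a residue class modulo $p^k$, namely at most $(2N+1)/p^k+1$, with the crude observation that, for each $(a_1,\dots,a_{n-1},m)$, at most one residue class of $a_0$ is relevant for each $k$. This bounds the large-$k$ contribution by the probability that $p^k\mid f(m)$ for some $k$ with $p^k>N$, summed as a geometric series in $k$.

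Summing everything, if $N/M\to\infty$ then $M^2/N=o(M\log M)$ and $M\log\log(3N)$ is dominated by the pointwise bound. The ``in probability'' statement then follows from Markov's inequality, once one notes that $\log\log N$ is $o(\log M)$ in the intended range $\log(N/M)=o(\log M)$, the same regime as in Lemma~\ref{lem:product-values-rational}.
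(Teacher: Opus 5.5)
Your argument is the paper's proof. It uses the same split into the first power and the powers $k\ge2$. The first power is bounded pointwise via at most $n$ roots modulo $p$, $|\operatorname{Disc}(f)|\ll_n N^{2n-2}$ and $\sum_{p\mid D}(\log p)/p\ll\log\log(3|D|)$. For $k\ge2$ you drop ramification, put $a_0$ in one class modulo $p^k$ with probability $p^{-k}+O(1/N)$, and take $k\ll\log(NM)/\log p$. This reaches the same intermediate bound $M+\frac{M^2\log(NM)}{N\log M}$, which the paper closes by invoking $\log(NM)\asymp\log M$.

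The step you single out as the main obstacle is not one, and neither that appeal nor your workaround is needed. Since $(\log t)/t$ is decreasing for $t\ge e$, $N\ge M\ge3$ gives $M\log N\le N\log M$. Hence $\frac{M^2\log(NM)}{N\log M}\le\frac{2M^2\log N}{N\log M}\le 2M$. So your first computation already proves the lemma under $N\ge M$ alone, and there in any case $M^2/N\le M$.

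You should drop the capping argument, which as sketched has two defects:
\begin{itemize}
\item The condition $N\ge M\log N$ is superfluous. The error in that range is $\frac{M}{N}\pi(M)\log N$, not $\frac{M}{N}M\log N$, and it is $\ll M$ by the same monotonicity.
\item For $p^k>N$, the $+1$ in the count of $a_0$ in a residue class modulo $p^k$ does not decay in $k$, so there is no geometric series. You would instead have to count the $O_n(\log M/\log p+1)$ exponents with $N<p^k\le|f(m)|$.
\end{itemize}

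Your caveat on the final assertion is correct and more careful than the paper. $N/M\to\infty$ alone does not give $M\log\log(3N)=o(M\log M)$. The paper's one-line appeal to Markov's inequality tacitly uses the range $\log N\asymp\log M$ of the conditional averaged theorem.
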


\begin{proof}
Split $R_f(M)=R_{1,f}(M)+R_{\ge2,f}(M)$ according to the first power of $p$ and the powers $p^k$, $k\ge2$.

Since a ramified prime divides $\operatorname{Disc}(f)$ and $s_p(f)\le n$,
\[
 R_{1,f}(M)
 \ll_n M\sum_{p\mid\operatorname{Disc}(f)}\frac{\log p}{p}
      +\sum_{p\mid\operatorname{Disc}(f)}\log p.
\]
The discriminant is an integral polynomial of degree $2n-2$ in the coefficients, so
$|\operatorname{Disc}(f)|\ll_n (1+N)^{2n-2}$.  The standard elementary estimate
\[
 \sum_{p\mid D}\frac{\log p}{p}\ll\log\log(3|D|)
\]
therefore gives
\[
 R_{1,f}(M)\ll_n M\log\log(3N)+\log(3N).
\]

For $k\ge2$ we may drop the ramification condition.  For fixed $m,p^k$, the congruence
$f(m)\equiv0\pmod{p^k}$ imposes one residue class on the constant coefficient once the other coefficients are fixed.  Hence in the full coefficient box
\[
 \mathbb P(f(m)\equiv0\pmod{p^k})
 \ll \frac1{p^k}+\frac1N.
\]
Conditioning on $G_f=S_n$ changes probabilities by $O_n(\Delta_N)$, with
$\Delta_N\ll_n N^{-1}$.  Summing over $m\le M$, $p\le M$, and
$2\le k\ll_n\log(NM)/\log p$ yields
\[
 \mathbb E_N R_{\ge2,f}(M)
 \ll_n
 M\sum_p\frac{\log p}{p^2}
 +\frac{M}{N}\sum_{p\le M}\log p\,
      \frac{\log(NM)}{\log p}
 \ll_n M+\frac{M^2}{N},
\]
using $\pi(M)\ll M/\log M$ and $\log(NM)\asymp\log M$ in the polynomial ranges considered here.  Markov's inequality gives the final assertion.
\end{proof}

The remaining large-prime term $C_f(M)$ measures collisions: for $p>M$, two distinct values $f(m_1),f(m_2)$ divisible by $p$ force
\[
 p\mid \frac{f(m_2)-f(m_1)}{m_2-m_1}.
\]
The elementary small-modulus counting lemma does not control these collisions uniformly for all $p>M$.  We therefore isolate the necessary input.

\begin{description}
\item[(D) Prime-root dispersion.]  The estimate \eqref{eq:dispersion-hyp} holds uniformly for $2\le x\le M$.
\item[(C) Large-prime collisions.]  One has
\[
 \mathbb E_N C_f(M)=o(M\log M).
\]
\end{description}

\begin{thm}[conditional averaged Cilleruelo asymptotic]\label{thm:conditional-average}
Assume $N/M\to\infty$ and $\log N\asymp\log M$.  If \textup{(D)} and \textup{(C)} hold, and $\log(N/M)=o(\log M)$, then
\[
 \Psi_f(N,M)=(n-1)M\log M+o(M\log M)
\]
asymptotically almost surely in $\mathscr P^0_{n,N}(\mathbb Q)$.
\end{thm}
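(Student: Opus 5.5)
We start from the decomposition \eqref{eq:basic-decomposition-rigorous} and estimate each of its five terms in probability, aiming to show that each one other than $\log P_f(M)$ and the unramified small-prime sum is $o(M\log M)$. We then show that these two surviving terms combine to $(n-1)M\log M$.

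\textbf{Archimedean and trivial terms.} Since $\log(N/M)=o(\log M)$ and $\log N\asymp\log M$, Lemma~\ref{lem:product-values-rational} gives $\log P_f(M)=nM\log M+o(M\log M)$ uniformly over $\mathscr P^0_{n,N}(\mathbb Q)$. Every $f$ there is irreducible, because $G_f=S_n$. Each $\beta_p(M)\ll_n\log(NM)/\log p$, so
\[
 \sum_{p\le M}\beta_p(M)\log p\ll_n \pi(M)\log(NM)\ll M,
\]
using $\log N\asymp\log M$. Lemma~\ref{lem:ramified-small} with $N/M\to\infty$ gives $R_f(M)=o(M\log M)$ in probability. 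Hypothesis (C) and Markov's inequality, using $C_f(M)\ge0$, give $C_f(M)=o(M\log M)$ in probability.

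\textbf{Unramified small primes.} By the Hensel formula recorded before Lemma~\ref{lem:weighted-dispersion},
\[
 \sum_{\substack{p\le M\\p\nmid\operatorname{Disc}f}}\alpha_p(M)\log p
 =M\sum_{\substack{p\le M\\p\nmid\operatorname{Disc}f}}\frac{s_p(f)\log p}{p-1}
 +O_n\!\left(\pi(M)\log(NM)\right).
\]
The error term is $O(M)$. Replacing $1/(p-1)$ by $1/p$ costs $O_n(M)$, because $s_p(f)\le n$. We then split $s_p(f)=1+(s_p(f)-1)$. The unit part gives $M\sum_{p\le M}\log p/p$, minus the ramified primes, which is $M\log M+O(M)$ by Mertens. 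Removing the ramified primes costs $M\sum_{p\mid\operatorname{Disc}f}\log p/p\ll M\log\log(3N)=o(M\log M)$, as in the proof of Lemma~\ref{lem:ramified-small}. The centred part equals $MW_f(M)$. Hypothesis (D) and Lemma~\ref{lem:weighted-dispersion} give $\mathbb E_N|W_f(M)|^2\ll_n1$. Chebyshev's inequality then shows $|W_f(M)|\le\omega(M)$ with probability $1-O(\omega(M)^{-2})$, for any $\omega(M)\to\infty$. Taking $\omega(M)=\sqrt{\log M}$ gives $MW_f(M)=o(M\log M)$ asymptotically almost surely. Hence the unramified sum equals $M\log M+o(M\log M)$ in probability.

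\textbf{Conclusion and main obstacle.} Combining these, and using that a finite union of events of probability $o(1)$ still has probability $o(1)$, we get
\[
 \Psi_f(N,M)=nM\log M-M\log M+o(M\log M)
\]
asymptotically almost surely, which is the claim. All the hard analytic content is supplied by hypotheses (D) and (C), so the proof is an assembly argument. The most delicate point I expect is the uniformity of the Hensel formula for $\alpha_p(M)$. It has to be uniform in $f$ and in $p\le M$, including primes $p$ close to $M$, and prime powers $p^k$ with $p\nmid\operatorname{Disc}f$ must contribute an error absorbed into $O(\log(NM)/\log p)$. This requires counting root lifts modulo $p^k$ over an interval of length $M$ that may be shorter than $p^k$. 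Each of the at most $n$ root classes modulo $p^k$ meets $[1,M]$ in at most $M/p^k+1$ points, which gives exactly this error after summing over $k\ll\log(NM)/\log p$, so I expect the step to go through.
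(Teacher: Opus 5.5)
Your proposal is correct and follows the paper's proof essentially step by step. You use the same decomposition \eqref{eq:basic-decomposition-rigorous}, Lemma~\ref{lem:product-values-rational} for $\log P_f(M)$, the trivial bound for the $\beta_p$ sum, Hensel plus Mertens together with Lemma~\ref{lem:weighted-dispersion} under \textup{(D)} for the unramified sum, and Lemma~\ref{lem:ramified-small} and \textup{(C)} with Markov for $R_f(M)$ and $C_f(M)$. You only spell out bookkeeping the paper compresses: replacing $1/(p-1)$ by $1/p$, removing the ramified primes from the Mertens sum at cost $M\log\log(3N)$, and the Chebyshev step turning $\mathbb E_N|W_f(M)|^2\ll_n 1$ into $W_f(M)=O_{\mathbb P}(1)$.
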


\begin{proof}
The second term in \eqref{eq:basic-decomposition-rigorous} satisfies
\[
 \sum_{p\le M}\beta_p(M)\log p\ll_n M+\pi(M)\log N=o(M\log M).
\]
For the unramified third term, Hensel's lemma and Mertens' formula give
\[
 \sum_{\substack{p\le M\\p\nmid\operatorname{Disc}(f)}}\alpha_p(M)\log p
 =M\log M+M W_f(M)+O_n(M\log\log(3N)+M).
\]
By Lemma~\ref{lem:weighted-dispersion}, $W_f(M)=O_{\mathbb P}(1)$, hence
$MW_f(M)=o_{\mathbb P}(M\log M)$.  Lemma~\ref{lem:ramified-small} gives
$R_f(M)=o_{\mathbb P}(M\log M)$, and \textup{(C)} plus Markov's inequality gives the same for $C_f(M)$.  Lemma~\ref{lem:product-values-rational} supplies the first term.  Substitution in \eqref{eq:basic-decomposition-rigorous} proves the result.
\end{proof}

\begin{osserv}
Lemma~\ref{lem:weighted-dispersion} treats the weighted prime-root fluctuation without introducing an auxiliary truncation parameter.  What remains genuinely unproved is the full-range dispersion estimate \eqref{eq:dispersion-hyp} and the large-prime collision estimate \textup{(C)}.  Lemma~\ref{l11} only gives simultaneous congruence equidistribution when the product of the moduli is sufficiently small compared with the coefficient box, so it cannot by itself establish \textup{(D)} for all $p,q\le M$.
\end{osserv}

\section{Individual bounds and the quadratic case}
We record what the present method does and does not provide for a fixed polynomial. The elementary divisibility
\[
\lcm\{f(\lambda):\lambda\in\Lambda(M)\}\mid
\prod_{\lambda\in\Lambda(M)}(f(\lambda))
\]
for any finite sampling set $\Lambda(M)$ gives the unconditional individual estimate
\[
\log N_{K/\Q}\!\left(\lcm\{f(\lambda):\lambda\in\Lambda(M)\}\right)
\le \sum_{\lambda\in\Lambda(M)}\log|N_{K/\Q}(f(\lambda))|.
\]
With the normalization used in the proof this is of order at most $nM\log M$. Obtaining the sharp coefficient $n-1$ for a fixed polynomial requires a lower estimate for the contribution of repeated prime-ideal divisors; this is exactly the part for which the present paper uses averaging. Consequently, a sharp individual upper bound is not a formal corollary of our argument, and we do not state it as a theorem.

Over $\Z$, Maynard--Rudnick \cite{MR} proved the general lower bound $\log L_f(M)\gg_f M\log M$ for every irreducible polynomial of degree at least two. Sah \cite{Sah} subsequently improved the explicit degree-dependent constant and obtained a related lower bound for the radical of the product of the polynomial values. Their arguments combine information on large prime divisors with the action of the Galois group. A number-field analogue should be expected, with rational primes replaced by prime ideals and Chebotarev densities over $K$, but carrying this out requires a separate treatment of residue degrees, ramification, and the chosen finite sampling region. Since these ingredients are not established in the present manuscript, we include the comparison but leave the corresponding individual lower bound as a separate problem.

For degree two, the individual asymptotic over $\Z$ is Cilleruelo's theorem. An extension over a general number field would require effective equidistribution of the two roots of a quadratic congruence as the prime ideal varies. The probabilistic congruence-counting lemmas in Sections 2 and 3 average over the coefficients and do not replace this input. Thus an individual quadratic theorem requires additional analytic input beyond the averaging arguments developed here.

\noindent
\footnotesize
\textbf{ILARIA VIGLINO}\\
	Institut de Math\'{e}matiques, \textbf{EPFL} \'{E}cole polytechnique f\'{e}d\'{e}rale de Lausanne, Lausanne, Switzerland\\
\textit{Email address}: \texttt{ilaria.viglino@yahoo.it}\\
ORCID: 0000-0002-0230-5495


\begin{thebibliography}{99}
	\footnotesize
	\lhead{}
	
	
	\bibitem[1]{BKS} Bateman, P., Kalb, J., Stenger, A. Problem 10797: A limit involving least common multiples. Am. Math. Mon. 109 (2002), no. 4, 393-394.
	
	\bibitem[2]{Bh1} Bhargava, M. Galois groups of random integer polynomials and van der Waerden's conjecture, \emph{Ann. of Math.} (2) \textbf{201} (2025), no.~2, 339--377. doi: 10.4007/annals.2025.201.2.1.
	
	
		\bibitem[3]{CD} Chow, S., Dietmann, R. Enumerative Galois theory for cubics and quartics, Adv.
	Math. 372 (2020): 107282.	
	
	\bibitem[4]{Cil} Cilleruelo, J. The least common multiple of a quadratic sequence, Compositio Math. 147 (2011), 1129-1150.	
	
	\bibitem[5]{Di} Dietmann, R. Probabilistic Galois theory. Bull. London Math. Soc. 45(3), 453-462 (2013).
	


	\bibitem[6]{Gal} Gallagher, P. X. The large sieve and probabilistic Galois theory. In Analytic
	number theory (Proc. Sympos. Pure Math., Vol. XXIV, St. Louis Univ., St.
	Louis, Mo., 1972), pages 91-101. Amer. Math. Soc., Providence, R.I., 1973.


	

	

	\bibitem[7]{LO} Lagarias, J.C., Odlyzko, A.M. Effective versions of the chebotarev density theorem. In A. Frohlich, editor, Algebraic Number Fields,
	$L$-Functions and Galois Properties, pages 409-464. Academic Press, New
	York, London, 1977.
	


	\bibitem[8]{Na} Nagel, T. G\'{e}n\'{e}ralisation d'un th\'{e}or\`{e}me de Tchebycheff
	Journal de math\'{e}matiques pures et appliqu\'{e}es 8e s\'{e}rie, tome 4 (1921), p. 343-356.	
	


	\bibitem[9]{RZ}
Rudnick, Z., Zehavi, S.
On Cilleruelo's conjecture for the least common multiple of polynomial sequences,
\emph{Rev. Mat. Iberoam.} \textbf{37} (2021), no.~4, 1441--1458.
doi: 10.4171/RMI/1234.		

	


	
	\bibitem[10]{MR} Maynard, J., Rudnick, Z. A lower bound on the least common multiple of polynomial sequences, Riv. Mat. Univ. Parma (N.S.) 12 (2021), no. 1, 143--150.

\bibitem[11]{Sah} Sah, A. An improved bound on the least common multiple of polynomial sequences, J. Th\'eor. Nombres Bordeaux 32 (2020), no. 3, 891--899.


\bibitem[12]{DFI} {Duke, W., Friedlander, J. B., Iwaniec, H.} Equidistribution of roots of a quadratic congruence to prime moduli, Ann. of Math. (2) 141 (1995), no. 2, 423--441.

\bibitem[13]{Toth} {T'oth, \`A.} Roots of quadratic congruences, Int. Math. Res. Not. 2000, no. 14, 719--739.

\bibitem[14]{Ngo} {Ngo, H. T.} On roots of quadratic congruences, Bull. Lond. Math. Soc. 56 (2024), no. 9, 2886--2910.

\bibitem[15]{LangANT} {Lang, S.} Algebraic Number Theory, second edition, Graduate Texts in Mathematics 110, Springer, 1994.

\bibitem[16]{GL} {Gruber, P. M., Lekkerkerker, C. G.} Geometry of Numbers, second edition, North-Holland Mathematical Library 37, 1987.

\bibitem[17]{Vi} Viglino, I. \emph{Galois groups of polynomials over a fixed number field: determinant methods and quantitative counting}, manuscript.  The paper proves integral-point estimates for curves and surfaces in boxes with different side lengths over $\mathcal O_K$ and applies them to Galois resolvents.
	
	\bibitem[18]{Wa} van der Waerden, R. J. Die Seltenheit der reduziblen Gleichungen und der Gleichungenmit Affekt, Monatsh. Math. Phys., 43 (1936), No. 1, 133-147.
	

\end{thebibliography}
\end{document}